\documentclass{birkjour}

\usepackage{amsmath, amssymb, amsthm}
\usepackage{mathrsfs, mathtools}
\usepackage{hyperref}
\usepackage{cleveref}

\theoremstyle{plain}
\newtheorem{theorem}{Theorem}[section]

\theoremstyle{definition}
\newtheorem{definition}[theorem]{Definition}

\theoremstyle{remark}
\newtheorem{remark}[theorem]{Remark}
\newcommand{\HH}{\mathbf{H}}          
\newcommand{\HC}{\mathcal{H}_C}
\newcommand{\HA}{\mathcal{H}_A}
\newcommand{\KK}{\mathcal{K}}         
\newcommand{\KKtot}{\mathcal{K}_{\mathrm{tot}}} 
\newcommand{\fh}{\mathfrak{h}}        
\newcommand{\Jbf}{\mathbf{J}}         
\newcommand{\ran}{\operatorname{ran}}
\newcommand{\dom}{\operatorname{dom}}
\newcommand{\spect}{\sigma} 

\newcommand{\ip}[2]{\langle #1,\, #2 \rangle}
\newcommand{\ipC}[2]{\langle #1,\, #2 \rangle_C}
\newcommand{\ipA}[2]{\langle #1,\, #2 \rangle_A}
\newcommand{\ipK}[2]{\langle #1,\, #2 \rangle_{\KK}}

\begin{document}

\title[Simultaneous Williamson's normal form for positive operators]{Simultaneous Williamson's normal form for positive operators}
\author[Kamat]{Rudra Kamat}
    \address{Department of Mathematics, Heriot-Watt University, Edinburgh, Scotland EH14 4AS}
    \email{rk4043@hw.ac.uk}
\author[Tonny]{Anmary Tonny}
	\address{Statitics and Mathematics Unit, Indian Statistical Institute Bangalore Centre, 8th Mile Mysore Road, Bengaluru, India 560059}
	\email{anmarytonny97@gmail.com}
\date{May 5, 2026}

\begin{abstract}
We prove a simultaneous Williamson's normal form for two symplectically commuting positive operators $A, B$ on a real symplectic Hilbert space $(\HH, J)$. Under a natural commutativity condition, there exists a (generally unbounded) operator $\Phi$ that simultaneously diagonalizes $A$ and $B$ in the symplectic setting as $A = \Phi^t M_A \Phi$, $B = \Phi^t M_B \Phi$, and maps $J$ to a canonical symplectic operator. The result generalizes Williamson's normal form to infinite dimensions and yields a complete spectral invariant for the pair $(A,B)$.
\end{abstract}

\maketitle
\tableofcontents

\section{Introduction}
\label{sec:intro}

Williamson's normal form is a fundamental result in symplectic linear algebra. It is a diagonalization result for positive-definite real matrices of even order, where the diagonalization is carried out by symplectic matrices. Note that a symplectic matrix is a real matrix $M$ of order $2n,$ that satisfies the identity $M^{t}JM = J$, where
 	$ J = \begin{bmatrix}
 		0 & I_n \\
 		-I_n & 0
 	\end{bmatrix}$ and $I_n
 	$ is the identity matrix of order $n$. The normal form is as follows.
    
\begin{theorem} (Williamson's normal form \cite{jw01})  \label{wnffd}
 		Let $A$ be a strictly positive matrix of order $2n$. Then there exists a symplectic matrix $M$ of order $2n$ and a strictly positive diagonal matrix D of order $n$ such that
 		$$
 		A = 
 		M^t\begin{bmatrix}
 			D & 0 \\
 			0 & D
 		\end{bmatrix}M.
 		$$
 		Furthermore, D is unique up to the order of its entries.		
 	\end{theorem}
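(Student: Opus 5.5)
The plan is to reduce to the spectral theory of a real skew-symmetric matrix. Set $K = A^{1/2} J A^{1/2}$, with $A^{1/2}$ the positive square root, which is invertible since $A$ is strictly positive. Then $K^t = A^{1/2}J^tA^{1/2} = -K$, and $K$ is invertible because $J$ is. A real invertible skew-symmetric matrix of order $2n$ is orthogonally similar to a block form: there is an orthogonal $Q$ and a strictly positive diagonal $D'$ of order $n$ with
$$
K = Q \begin{bmatrix} 0 & D' \\ -D' & 0 \end{bmatrix} Q^t .
$$
This follows from diagonalizing the normal matrix $K$ over $\mathbb{C}$. Its eigenvalues are $\pm i d_j$ with $d_j>0$. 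The real and imaginary parts of a unit eigenvector for $i d_j$ give an orthonormal pair, after scaling by $\sqrt2$. Reordering these pairs gives the block form with $J$-type sign convention.

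Next, define $D = (D')^{-1}$ and
$$
M = \begin{bmatrix} D^{1/2} & 0\\ 0 & D^{1/2}\end{bmatrix} Q^t A^{1/2},
$$
where $D^{1/2}$ is diagonal. Then
$$
M^t \begin{bmatrix} D & 0\\ 0 & D\end{bmatrix}^{-1}\!\!\cdot\!\text{(check)}
$$
is not what we need; the relevant identity is
$$
M^t \begin{bmatrix} D^{-1}&0\\0&D^{-1}\end{bmatrix} M = A .
$$
This is the wrong scaling, so instead take $M = \mathrm{diag}(D^{-1/2},D^{-1/2})\,Q^tA^{1/2}$. Then
$$
M^t\,\mathrm{diag}(D,D)\,M = A^{1/2}QQ^tA^{1/2} = A .
$$
For symplecticity, compute
$$
M J M^t = \mathrm{diag}(D^{-1/2},D^{-1/2})\,Q^t K Q\,\mathrm{diag}(D^{-1/2},D^{-1/2}) .
$$
Since $Q^tKQ$ is the block form with $D'=D^{-1}$, and the diagonal blocks commute, this equals
$$
\begin{bmatrix}0 & D^{-1/2}D^{-1}D^{-1/2}\\ \cdot&0\end{bmatrix},
$$
which is not $J$. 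So the exponents must be fixed: choose the scaling so that $M J M^t = J$, namely $M = \mathrm{diag}(D'^{-1/2},D'^{-1/2})\,Q^tA^{1/2}$. Then $MJM^t = J$. Since $M$ is invertible, $MJM^t=J$ is equivalent to $M^tJM=J$; this follows by inverting and using $J^{-1}=-J$, since $M^{-t}JM^{-1}=J$ implies that $M^{-1}$, and hence $M$, is symplectic in the transpose sense. Moreover,
$$
M^t\,\mathrm{diag}(D',D')\,M = A .
$$
So $D = D'$, the symplectic eigenvalues. The main care in this step is bookkeeping the exponents so that both identities hold; with $D'$ as above they do.

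For uniqueness, suppose $A = M^t\,\mathrm{diag}(D,D)\,M$ with $M$ symplectic. Then $JA = JM^t\,\mathrm{diag}(D,D)\,M$. Since $JM^t = M^{-1}J$ (from $MJM^t=J$), this gives
$$
JA = M^{-1}\,J\,\mathrm{diag}(D,D)\,M ,
$$
which is similar to $J\,\mathrm{diag}(D,D)$. The eigenvalues of $J\,\mathrm{diag}(D,D)$ are $\pm i d_j$. Hence the multiset $\{d_j\}$ is determined by the spectrum of $JA$, so $D$ is unique up to order.

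I expect the main obstacle to be the orthogonal block-diagonalization of $K$ with the correct sign convention. A sign flip within a pair can be fixed by swapping its two vectors.
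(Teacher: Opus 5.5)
Your argument is correct. The paper does not prove this statement; it quotes it from Williamson as background. Your construction is nevertheless in effect the finite-dimensional version of the paper's proof of Theorem~\ref{thm:di-wnf}:
\begin{itemize}
\item There the authors work with $T=JA$, which is skew-adjoint for $\ipA{\cdot}{\cdot}$. Since $A^{1/2}$ is an isometry from $(\mathbb{R}^{2n},\ipA{\cdot}{\cdot})$ onto $(\mathbb{R}^{2n},\ip{\cdot}{\cdot})$ carrying $JA$ to $A^{1/2}JA^{1/2}=K$, your $Q^tA^{1/2}$ is their $\Phi_R=\Gamma_0\circ\iota$. Indeed $\Phi_R^t\Phi_R=A$, and $\Phi_RJ\Phi_R^t=Q^tKQ$ is the analogue of their $M\Jbf_\KK$.
\item Your choice of the pairs $\sqrt2\,\mathrm{Re}\,z,\ \sqrt2\,\mathrm{Im}\,z$ is the analogue of their polarisation by the complex structure $|K|^{-1}K$.
\item Your final symplectic matrix $\mathrm{diag}((D')^{-1/2},(D')^{-1/2})\,Q^tA^{1/2}$ is exactly their $\Phi_R'=M^{-1/2}\Phi_R$, where their multiplication operator $M$ corresponds to $\mathrm{diag}(D',D')$. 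This includes the fact that the identity which naturally falls out is the ``$\Phi J\Phi^t=J$'' form, which you correctly convert to $M^tJM=J$.
\end{itemize}
The real difference is in uniqueness. You read $\{d_j\}$ off the spectrum of $JA$, which is similar to $J\,\mathrm{diag}(D,D)$. The paper instead transports one factorisation to the other by a unitary and compares the moduli $|M\Jbf_\KK|=M$. In finite dimensions your similarity argument is shorter and gives precisely ``unique up to order''; the paper's version is what survives when there is no eigenvalue list to compare.

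Before this is a finished proof, remove the abandoned attempts, which are confusing and partly mislabelled. With $D=(D')^{-1}$, your first choice $\mathrm{diag}(D^{1/2},D^{1/2})\,Q^tA^{1/2}$ already equals your final matrix. The identity $M^t\,\mathrm{diag}(D^{-1},D^{-1})\,M=A$ that you dismissed as ``the wrong scaling'' is exactly what is wanted, since $D^{-1}=D'$. It is your second choice that fails symplecticity, producing blocks $\pm(D')^2$. As written, $D$ is first defined as $(D')^{-1}$ and later set equal to $D'$; fix one convention.

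In the block-diagonalisation of $K$, also state why distinct pairs are mutually orthogonal. Take an orthonormal eigenbasis of each eigenspace for $id$ with $d>0$, and note that $z\perp\bar w$ because $\bar w$ lies in an eigenspace with negative imaginary eigenvalue. Then $\mathrm{Re}\,z$ and $\mathrm{Im}\,z$ are orthogonal to $\mathrm{Re}\,w$ and $\mathrm{Im}\,w$. With $Kz=idz$ and $z=x+iy$ one gets $Kx=-dy$ and $Ky=dx$. So placing $\sqrt2\,x$ in column $j$ and $\sqrt2\,y$ in column $n+j$ of $Q$ gives $\left[\begin{smallmatrix}0&D'\\-D'&0\end{smallmatrix}\right]$ directly, with no swap needed.
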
  The entries of the diagonal matrix $D$ are defined as the symplectic eigenvalues of the matrix $A$. Williamson's normal form and the theory of symplectic eigenvalues have found applications in matrix analysis, symplectic geometry, and mathematical physics; see \cite{arvindpramana, rbt01, bhatia_jain_2021,  mag02, mag01, repsumtiju, logmaj, wolf, jain2022derivatives, mishra2024generalization, hkm20, Par10, Par13, Par15, tyt01}. The finite-dimensional result was extended to positive invertible operators on real separable Hilbert space by Bhat and Tiju in 2019, and is as follows.
\begin{theorem}(Williamson's normal form \cite{BJ19}) \label{wnfifd}
 		Let $\mathcal{H}$ be a real separable Hilbert space and $A$ be a strictly positive invertible operator on $ \mathcal{H} \oplus \mathcal{H}$. Then there exists a positive invertible operator $D$ on $\mathcal{H}$ and a symplectic operator $M$ on $\mathcal{H} \oplus \mathcal{H}$ such that 
 		$$
 		A = M^t\begin{bmatrix}
 			D & 0 \\
 			0 & D
 		\end{bmatrix}M.$$ 
 		Furthermore, $D$ is unique up to conjugation with an orthogonal transformation. 
 	\end{theorem}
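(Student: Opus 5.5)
The plan is the standard reduction used in finite dimensions: turn the symplectic problem into a spectral problem for a skew-adjoint operator built from $A$ and $J$. Write $\mathcal{H}\oplus\mathcal{H}$ and $J=\begin{bmatrix}0&I\\-I&0\end{bmatrix}$. Since $A$ is strictly positive and invertible, $A^{1/2}$ and $A^{-1/2}$ are bounded. Form
\[
K=A^{1/2}JA^{1/2}.
\]
This is a bounded, invertible, real skew-adjoint operator, since $K^t=A^{1/2}J^tA^{1/2}=-K$. Then $-K^2=K^tK$ is positive and invertible, so $|K|=(K^tK)^{1/2}$ is positive and invertible. In the polar decomposition $K=U|K|$, the factor $U$ is orthogonal, commutes with $|K|$, and satisfies $U^2=-I$ and $U^t=-U$. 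Thus $U$ is a complex structure commuting with $|K|$.

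\textbf{Key step.} Show there is an orthogonal $W:\mathcal{H}\oplus\mathcal{H}\to\mathcal{H}\oplus\mathcal{H}$ with $W^tUW=J$ and $W^t|K|W=\begin{bmatrix}D^{-1}&0\\0&D^{-1}\end{bmatrix}$ for some positive invertible $D$ on $\mathcal{H}$.

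To get $W$, regard $\mathcal{H}\oplus\mathcal{H}$ as a complex Hilbert space $\mathcal{K}$ with $i$ acting as $U$ (or $-U$, matching signs with $J$). The inner product is $\langle x,y\rangle_{\mathcal{K}}=\langle x,y\rangle+i\langle x,Uy\rangle$, up to conjugation convention. Because $|K|$ commutes with $U$ and is real self-adjoint, it is a complex-linear positive invertible operator on $\mathcal{K}$.

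We need $\mathcal{K}$ to have a real form $\mathcal{L}$ invariant under $|K|$. I would produce it via the spectral theorem in multiplication form: $|K|$ is unitarily equivalent to multiplication by a real function on $L^2(X,\mu;\mathbb{C})$. The real subspace $L^2(X,\mu;\mathbb{R})$ is then invariant, and $\mathcal{K}=\mathcal{L}\oplus U\mathcal{L}$ orthogonally in the real inner product.

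Next, choose a real orthogonal identification $\mathcal{L}\cong\mathcal{H}$. This is possible because both are separable real Hilbert spaces of the same dimension: each is half of $\mathcal{H}\oplus\mathcal{H}$, and in the infinite-dimensional case both have countable dimension. Define $W(x,y)=x+Uy$, or with the sign adjusted. This $W$ is orthogonal and intertwines $J$ with $U$, and $|K|$ becomes $\mathrm{diag}(T,T)$ with $T$ positive invertible on $\mathcal{H}$. Set $D=T^{-1}$.

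\textbf{Assembly.} Set $M=D^{1/2}_{\oplus}\,W^tA^{1/2}$, where $D^{1/2}_{\oplus}=\mathrm{diag}(D^{1/2},D^{1/2})$.
\begin{itemize}
\item Since $D^{1/2}_{\oplus}$ commutes with $J$, we get $M^tJM=A^{1/2}WD^{1/2}_{\oplus}JD^{1/2}_{\oplus}W^tA^{1/2}=A^{1/2}W\,\mathrm{diag}(D,D)\,J\,W^tA^{1/2}$. Using $WJW^t=U$ and $W\,\mathrm{diag}(D,D)\,W^t=|K|^{-1}$, this gives $A^{1/2}|K|^{-1}UA^{1/2}$.
\item To check this equals $J$, compare the inverses. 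We have $K^{-1}=A^{-1/2}J^{-1}A^{-1/2}$ and $K^{-1}=|K|^{-1}U^{-1}$. So $|K|^{-1}U=-K^{-1}=A^{-1/2}JA^{-1/2}$, using $J^{-1}=-J$.
\item Therefore $M^tJM=J$ up to a sign. The sign is fixed by replacing $U$ with $-U$, equivalently $W$ with $W$ composed with the flip $(x,y)\mapsto(x,-y)$.
\item Since $W\,\mathrm{diag}(D^{-1},D^{-1})\,W^t=|K|$, the same computation gives $M^t\,\mathrm{diag}(D^{-1},D^{-1})\,M=A^{1/2}W\,\mathrm{diag}(D^{1/2},D^{1/2})\,\mathrm{diag}(D^{-1},D^{-1})\,\mathrm{diag}(D^{1/2},D^{1/2})\,W^tA^{1/2}=A$.
\end{itemize}
To match the stated form, apply the construction with $D^{-1}$ in the place of $D$, or redefine $M$ using $D^{-1/2}_{\oplus}$. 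Then $M^t\,\mathrm{diag}(D,D)\,M=A$ with $D$ positive invertible. Invertibility of $M$ is clear from the factors, and since $M^tJM=J$, $M$ is symplectic.

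\textbf{Uniqueness.} Suppose $A=M_1^tD_{1\oplus}M_1=M_2^tD_{2\oplus}M_2$. Then $S=M_2M_1^{-1}$ is symplectic with $S^tD_{2\oplus}S=D_{1\oplus}$. The operators $D_{i\oplus}^{1/2}JD_{i\oplus}^{1/2}=\mathrm{diag}(D_i,D_i)J$ are similar via $D_{2\oplus}^{1/2}SD_{1\oplus}^{-1/2}$. This similarity is orthogonal: from $S^tD_{2\oplus}S=D_{1\oplus}$ we get $(D_{2\oplus}^{1/2}SD_{1\oplus}^{-1/2})^t(D_{2\oplus}^{1/2}SD_{1\oplus}^{-1/2})=I$. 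Orthogonal conjugacy of these skew operators gives orthogonal conjugacy of their absolute values $\mathrm{diag}(D_i,D_i)$. Viewed as complex-linear operators on $\mathcal{H}\otimes\mathbb{C}$, these are $D_i$ complexified, so $D_1$ and $D_2$ are unitarily equivalent. A real operator unitarily equivalent to another has the same real spectral multiplicity data, hence they are orthogonally equivalent.

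\textbf{Main obstacle.} The main difficulty is the key step, specifically the existence of a $|K|$-invariant real form $\mathcal{L}$ of $\mathcal{K}$ that is real-isometric to $\mathcal{H}$. It handles continuous spectrum and multiplicity, where no eigenbasis is available, so I would rely carefully on the multiplication-operator form of the spectral theorem.
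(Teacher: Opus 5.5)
The paper does not prove this statement; it is quoted from \cite{BJ19}. The closest argument in the paper is its direct-integral proof of Theorem~\ref{thm:di-wnf}. Your proposal is a correct proof of the quoted result, by a genuinely different route, and needs only the small repairs listed at the end.

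\textbf{Comparison with the paper.} For invertible $A$, the paper's completion $\mathcal{H}_A$ is just $\mathcal{H}\oplus\mathcal{H}$ with an equivalent norm. The map $A^{1/2}$ sends $(\mathcal{H}\oplus\mathcal{H},\langle\cdot,\cdot\rangle_A)$ unitarily onto $(\mathcal{H}\oplus\mathcal{H},\langle\cdot,\cdot\rangle)$ and carries the paper's $\widehat{T}$ (the extension of $JA$) to your $K=A^{1/2}JA^{1/2}$. So both arguments diagonalize the same skew operator; they differ in how the complex structure and the Lagrangian splitting are obtained.

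The paper works fibre by fibre:
\begin{itemize}
\item it decomposes $-\widehat{T}^2$ as a direct integral and makes $\widehat{T}$ decomposable via the commutant theorem;
\item it forms $j_s=s^{-1/2}\tau_s$ on each fibre and picks a maximal isotropic $\mathfrak{h}_s$ in each fibre by Zorn's lemma;
\item it does not discuss the measurability of $s\mapsto\mathfrak{h}_s$, which is needed to assemble $\Gamma_0$, and it lands in an abstract direct integral rather than in $\mathcal{H}\oplus\mathcal{H}$.
\end{itemize}
You instead take the complex structure globally as the phase $U$ of $K=U|K|$. You then obtain a $|K|$-invariant Lagrangian $\mathcal{L}$ in one stroke, as the pull-back of $L^2(X,\mu;\mathbb{R})\subset L^2(X,\mu;\mathbb{C})$, so no fibrewise selection is needed. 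The dimension count $\mathcal{L}\cong\mathcal{H}$ then gives exactly the stated form: a bounded invertible symplectic $M$ on $\mathcal{H}\oplus\mathcal{H}$ and a positive invertible operator on $\mathcal{H}$ itself.

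What the paper's route buys is scope: it handles non-invertible $A$ with $J$-invariant kernel, at the cost of an unbounded $\Phi$, and it displays the invariant $([\mu],\dim\mathfrak{h}_\lambda)$ explicitly. Your route uses invertibility of $A$ essentially, through the boundedness of $A^{\pm1/2}$ and of $|K|^{-1}$ in the assembly of $M$.

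\textbf{Repairs.}

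(1) Naming and sign in the assembly. With $W^t|K|W=\operatorname{diag}(D^{-1},D^{-1})$ and $M=D^{1/2}_{\oplus}W^tA^{1/2}$ you get $A=M^t\operatorname{diag}(D^{-1},D^{-1})M$. So the Williamson operator is $D^{-1}$, the restriction of $|K|$ to $\mathcal{L}$ transported to $\mathcal{H}$; simply rename it. Your other suggestion, rebuilding $M$ with $D^{-1/2}_{\oplus}$, fails: it gives $M^tJM=A^{1/2}|K|UA^{1/2}=AJA\neq J$ in general. Also, once $W^tUW=J$ holds, the computation yields $M^tJM=J$ exactly. The sign belongs in the choice of $W$: for instance $W(x,y)=\theta x-U\theta y$, with $\theta:\mathcal{H}\to\mathcal{L}$ an orthogonal identification, satisfies $W^tUW=J$.

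(2) Invertibility of symplectic operators. Your uniqueness argument uses $M_1^{-1}$, so you must take symplectic operators to be invertible, as in \cite{BJ19}. With the paper's literal definition ($Q^tJQ=J$ only), uniqueness is false. Take the unilateral shift $V$ on $\ell^2$ and $D=\operatorname{diag}(5,1,1,\dots)$. Then $Q=\operatorname{diag}(V,V)$ satisfies $Q^tJQ=J$ and $Q^t\operatorname{diag}(D,D)Q=I=I^t\operatorname{diag}(I,I)I$, yet $D$ and $I$ are not orthogonally equivalent.

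(3) Complex-linearity before complexifying. Passing to complexifications requires $R=D_{2\oplus}^{1/2}SD_{1\oplus}^{-1/2}$ to commute with $J$. This follows from uniqueness of polar decompositions: $R^t(D_{2\oplus}J)R=D_{1\oplus}J$ and $R^tD_{2\oplus}R=D_{1\oplus}$ together force $R^tJR=J$. Alternatively, compare the multiplicity functions of $D_1\oplus D_1\cong D_2\oplus D_2$ directly.
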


     \noindent The spectrum of $D$ is defined as the symplectic spectrum of $A$. Also, a bounded linear operator $Q$ on  $\mathcal{H} \oplus \mathcal{H}$ is called a symplectic operator if $Q^{t}JQ = J$, where $J = \begin{bmatrix}
         0 & I \\
         -I & 0
     \end{bmatrix}$ and $I$ is the identity operator on $\mathcal{H}$.
 
While the finite-dimensional theory is well-established and studied, the infinite-dimensional counterpart is in its initial stage, a few of the contributions being \cite{bhatia2021szegHo, tcjvbkatp, vbkatp}. The purpose of this paper is to develop an infinite-dimensional counterpart of the simultaneous Williamson's normal form which was established by Kamat and Mishra in \cite{KAMAT2026298}. More generally, they obtained conditions for simultaneous symplectic reduction of positive-semidefinite quadratic forms to normal forms in a common symplectic basis. In particular, their results identify symplectic commutativity as the appropriate analogue of the usual commutativity condition for simultaneous orthogonal diagonalization.

The transition from matrices to operators is, however, not merely a change of setting. In finite dimensions, Williamson's normal form is based on the existence of eigenvalues and corresponding eigenvectors and ultimately yields a finite symplectic basis in which the operator has a diagonal matrix representation. In infinite dimensions, a positive operator need not have a discrete spectrum or a complete system of eigenvectors. Thus, a direct extension of the finite-dimensional eigenvector construction is generally not available. The appropriate analogue of a diagonal matrix is instead a multiplication operator arising from a spectral representation. Moreover, the constructions involved in an infinite-dimensional symplectic normal form naturally lead to densely defined and potentially unbounded operators, making questions concerning domains and closures. 

Here, we consider two positive operators $A$ and $B$ on a real separable Hilbert space satisfying the appropriate symplectic commutation condition. Rather than seeking a common symplectic eigenbasis, which need not exist in infinite dimensions, we formulate simultaneous symplectic diagonalization in terms of a common spectral representation by multiplication operators. Our result recovers the finite-dimensional simultaneous Williamson's normal form at the level of spectral representations, while addressing features that are specific to the infinite-dimensional setting. In finite dimensions, simultaneous diagonalization is expressed in terms of a common symplectic basis and a finite collection of symplectic
eigenvalues. In contrast, the infinite-dimensional result must accommodate continuous spectral components and the possible absence of eigenvectors. Consequently, diagonal matrices are replaced by multiplication operators and symplectic eigenvalues are replaced by measurable spectral parameters. In addition, the construction requires the treatment of densely defined and unbounded operators and their domains. These issues are absent from the finite-dimensional formulation and constitute a central part of the operator-theoretic construction developed in this paper.

We proceed in two stages. We first establish a symplectic spectral decomposition for a single positive operator. We then use this decomposition, together with the commutation relation between the two operators, to construct a common symplectic spectral representation for the pair $(A,B)$. Throughout the paper, $\mathcal H$ denotes a real separable Hilbert space. We work on the doubled Hilbert space \(
\HH:=\mathcal H\oplus\mathcal H,
\)
equipped with the standard symplectic form \(
\omega(x,y):=\langle x,Jy\rangle,
\)
where \(
J:=
\begin{bmatrix}
0&I\\
-I&0
\end{bmatrix}
\in\mathcal B(\HH).
\) Since
\(
J^t=-J,
J^2=-I,
\)
the bilinear form $\omega$ is skew-symmetric and non-degenerate. Consequently, $(\HH,\omega)$ is a real symplectic Hilbert space.




\noindent 

\section{Symplectic Spectral Decomposition via Direct Integrals}
\label{sec:direct-integral}

In this section, we prove that every bounded positive operator $A$ on a real symplectic Hilbert space, whose kernel is $J$-invariant, admits a direct integral decomposition in which the symplectic spectrum of $A$ appears explicitly and the symplectic structure is preserved. 

\subsection{Discussion on the main result and outline of the proof} \label{secproofoutline}
We outline the proof strategy before presenting the details. Since $\HH_0 := \ker(A)$ is $J$-invariant, $J$ preserves $\HH_R := \overline{\ran A} = (\ker A)^\perp$, giving $J = J_0\oplus J_R$ where $J_0 =
J|_{\HH_0}$ and $J_R = J|_{\HH_R}$. The central object is the Hilbert space $\HA$ obtained by completing $\HH_R$ in the inner product induced by $A_R := A|_{\HH_R}$. The operator $T_R := J_R A_R$ then extends to a bounded skew-adjoint operator $\widehat{T}$ on $\HA$; the spectral theorem applied to $-\widehat{T}^2$ simultaneously yields the direct integral decomposition and identifies the symplectic spectrum as a multiplication operator $M$. The symplectic transformation $\Phi_R := \Gamma_0 \circ \iota$ is then the composition of the inclusion $\iota : \HH_R \hookrightarrow \HA$ with the spectral isomorphism $\Gamma_0 : \HA \xrightarrow{\sim} \KK\oplus\KK$, which is bounded. Its transpose $\Phi_R^t : \dom(\Phi_R^t) \subset \KK\oplus\KK \to \HH_R$ is densely defined and unbounded when $A_R$ does not have a bounded inverse. The normal form becomes
\[
  A_R = \Phi_R^t \Phi_R,
  \qquad
  \Phi_R J_R \Phi_R^t = M\,\Jbf_\KK,
\]
where $M = \int^\oplus_{(0,\infty)} \lambda\cdot
I_{\fh_\lambda\oplus\fh_\lambda}\,d\mu(\lambda)$ is the Williamson's multiplication operator. The factor $M$ appears in the symplectic identity rather than in the normal form because $\Phi_R$ is an isometry from $(\HH_R, \ipA{\cdot}{\cdot})$ to $(\KK\oplus\KK, \ipK{\cdot}{\cdot})$, and the $A$-inner product already encodes the symplectic spectrum. We show in \cref{rem:two-forms} that this is equivalent to the more familiar form $A_R = (\Phi_R^\prime)^t M \Phi_R^\prime$,
$\Phi_R^\prime J_R (\Phi_R^\prime)^t = \Jbf_\KK$, obtained by setting $\Phi_R^\prime = M^{-1/2}\Phi_R$, a densely defined unbounded operator on $\HH_R$. Finally, we adjoin the kernel part to obtain the Williamson's normal form for the operator $A$.

Our main result, Theorem~\ref{thm:sim-wnf}, provides a simultaneous symplectic representation of the form
\[
A=\Phi^t M_A\Phi,
\qquad
B=\Phi^t M_B\Phi,
\qquad
\Phi J\Phi^t=J_{\mathcal K_{\mathrm{tot}}},
\]
where $M_A$ and $M_B$ are multiplication operators on the corresponding spectral representation space. More precisely, the multiplication functions associated with $M_A$ and $M_B$ are denoted by $\alpha$ and
$\beta$, respectively, and the relevant joint spectral parameter is \(
\lambda=\alpha+\beta.
\)
Thus, the role played in finite dimensions by a common diagonal matrix is assumed here by a pair of multiplication operators on a common spectral space.

\subsection{Definitions}

Before going to the Williamson's normal form, we recall the following definitions.

\begin{definition}[Measurable Hilbert space bundle]
\label{def:mhsb}
Let $(X, \mathscr{B}, \mu)$ be a $\sigma$-finite Borel
measure space. A \emph{measurable Hilbert space bundle} over $X$ is an assignment $\lambda \mapsto \fh_\lambda$ of separable real Hilbert spaces together with a countable measurable partition $X = \bigsqcup_{n\geq 1}X_n$ (with $X_n \in \mathscr{B}$) such that $\dim\fh_\lambda = n$, for all $\lambda \in X_n$. The \emph{direct integral} $\KK = \int^\oplus_X \fh_\lambda\,d\mu(\lambda)$ is the Hilbert space of measurable sections $v(\lambda)\in\fh_\lambda$ with
$\int_X\|v(\lambda)\|^2\,d\mu(\lambda)<\infty$ and the inner product is given by 
\begin{equation}\ipK{u}{v} \coloneqq \int_X \ip{u(\lambda)}{v(\lambda)}_{\fh_\lambda}\,d\mu(\lambda).\end{equation}

A bounded operator $T$ on $\KK$ is \emph{decomposable} if $(Tv)(\lambda) = T(\lambda)v(\lambda)$ $\mu$-a.e.\ for bounded operators $T(\lambda)$ on $\fh_\lambda$; denoted as \begin{equation}T = \int^\oplus_{X}
T(\lambda)\,d\mu(\lambda).\end{equation}
\end{definition}

\begin{definition}[Fibrewise symplectic form]
\label{def:fiberwise-symp}
Given a bundle $\lambda\mapsto\fh_\lambda$ over $(X,\mu)$, set $\KK\cong \int^\oplus_X (\fh_\lambda\oplus\fh_\lambda) \,d\mu(\lambda)$. The \emph{fibrewise symplectic form} is $\langle \cdot, \Jbf_\KK \cdot \rangle$, where
\[
  \Jbf_\KK \coloneqq \int^\oplus_X J_\lambda\,d\mu(\lambda),
  \qquad
  J_\lambda \coloneqq
  \begin{pmatrix}0 & I_{\fh_\lambda}\\
    -I_{\fh_\lambda} & 0\end{pmatrix},
\]
satisfying $\Jbf_\KK^t = -\Jbf_\KK$ and
$\Jbf_\KK^2 = -\mathrm{I}_{\KK}$.
\end{definition}
\subsection{Williamson's normal form for bounded positive operators - direct integral approach}

In this section, we prove the Williamson's normal form for a bounded positive operator $A$ ($\langle Ax, x \rangle \geq 0,$ for all $x \in \HH$) on $\HH$ whose kernel is $J-$invariant. Following the notations as in the beginning of this section, we state the result below.

\begin{theorem}[Williamson's Normal Form] \label{thm:di-wnf}
Let $(\HH, \langle \cdot, \cdot \rangle, J)$ be a real separable symplectic Hilbert space, and let $A \in \mathcal{B}(\HH)$ be a bounded positive operator such that $\ker(A)$ is $J$-invariant. Then there exist:
\begin{enumerate}
    \item A $\sigma$-finite Borel measure $\mu$ on $[0, \infty)$ and a measurable Hilbert space bundle $\lambda \mapsto \mathfrak{h}_\lambda$ over $[0, \infty)$ with direct integral space 
    \[
    \mathcal{K} = \int_{[0,\infty)}^\oplus \mathfrak{h}_\lambda \, d\mu(\lambda);
    \]
    \item A real Hilbert space $\mathfrak{h}_0 \subset \HH_0$ such that $\HH_0 = \mathfrak{h}_0 \oplus J_0 \mathfrak{h}_0$;
    \item A closed, densely defined linear operator 
    \[
    \Phi = \Phi_0 \oplus \Phi_R' : \operatorname{dom}(\Phi) \subset \HH \longrightarrow \mathcal{K}_{\mathrm{tot}},
    \]
    where $\mathcal{K}_{\mathrm{tot}} = (\mathfrak{h}_0 \oplus \mathfrak{h}_0) \oplus (\mathcal{K} \oplus \mathcal{K})$, $\Phi_0 : \HH_0 \to \mathfrak{h}_0 \oplus \mathfrak{h}_0$ is a bounded orthogonal isomorphism, and $\Phi_R' = M^{-1/2}\Phi_R$ is a densely defined, closable operator on $\mathcal{H}_R$ with domain $\operatorname{dom}(\Phi_R') = \{ x \in \mathcal{H}_R : \Phi_R x \in \mathrm{ran}(M^{1/2}) \}$;
\end{enumerate}
such that on $\operatorname{dom}(\Phi) = \HH_0 \oplus \operatorname{dom}(\Phi_R')$, the following identities hold:
\[
A = \Phi^t M_A \Phi \quad \text{and} \quad \Phi J \Phi^t = J_{\mathcal{K}_{\mathrm{tot}}},
\]
where $M_A = 0_{\mathfrak{h}_0 \oplus \mathfrak{h}_0} \oplus M$, $M = \int_{[0,\infty)}^\oplus \lambda \cdot I_{\mathfrak{h}_\lambda \oplus \mathfrak{h}_\lambda} \, d\mu(\lambda)$, and $J_{\mathcal{K}_{\mathrm{tot}}} = J_{\mathfrak{h}_0} \oplus J_{\mathcal{K}}$ with $J_{\mathfrak{h}_0} = \begin{bmatrix} 0 & I_{\mathfrak{h}_0} \\ -I_{\mathfrak{h}_0} & 0 \end{bmatrix}$. 
Furthermore, $\Phi$ is injective with dense range, and the measure class $[\mu]$ and multiplicity function $\lambda \mapsto \dim \mathfrak{h}_\lambda$ are uniquely determined by $A$.
\end{theorem}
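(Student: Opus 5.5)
We follow the outline of \S\ref{secproofoutline}. Since $\HH_0=\ker A$ is $J$-invariant and $J$ is orthogonal, $J$ also preserves $\HH_R=\HH_0^\perp=\overline{\ran A}$. Hence $J=J_0\oplus J_R$ with $J_0^2=-I$ and $J_0^t=-J_0$. Consequently $J_0$ is a complex structure on $\HH_0$. A Zorn/Gram--Schmidt argument then gives a closed subspace $\fh_0\subset\HH_0$ with $\fh_0\perp J_0\fh_0$ and $\HH_0=\fh_0\oplus J_0\fh_0$: take a maximal orthonormal family $\{e_i\}$ with $\{e_i\}\perp\{J_0e_j\}$, and note that maximality forces completeness, since the orthocomplement is again $J_0$-invariant. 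We define $\Phi_0(x+J_0y)=(x,y)$, which is orthogonal and satisfies $\Phi_0J_0\Phi_0^t=J_{\fh_0}$. Because $M_A$ vanishes on this summand, the kernel part of the identities is immediate.

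On $\HH_R$ we set $\ipA{x}{y}=\ip{A_Rx}{y}$. This is an inner product because $A_R$ is injective, and we let $\HA$ be its completion. The operator $T_R=J_RA_R$ satisfies $\ipA{T_Rx}{y}=\ip{A_RJ_RA_Rx}{y}=-\ipA{x}{T_Ry}$. Moreover
\[
\|T_Rx\|_A^2=\ip{A_R^{1/2}J_RA_Rx}{A_R^{1/2}J_RA_Rx}\le\|A\|^2\|x\|_A^2,
\]
using $\|A^{1/2}J A^{1/2}\|\le\|A\|$. Hence $T_R$ extends to a bounded skew-adjoint $\widehat T$ on $\HA$. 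It is injective: if $\widehat T\xi=0$, then for $y\in\HH_R$ we have $0=\ipA{\widehat T\xi}{y}=-\ip{\xi}{A_RJ_RA_R y}$ in the pairing sense, and the density of $\ran(A_RJ_RA_R)$ in $\HA$ gives $\xi=0$. The operator $-\widehat T^2=|\widehat T|^2$ is positive and injective. Let $\widehat T=\widehat J|\widehat T|$ be its polar decomposition. Then $\widehat J$ is orthogonal with $\widehat J^2=-I$, and it commutes with $|\widehat T|$.

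By the spectral theorem (multiplicity form) applied to $|\widehat T|$ restricted to a maximal $\widehat J$-"Lagrangian" half (again chosen by Zorn, now inside each spectral subspace, which are $\widehat J$-invariant), we obtain a unitary
\[
\Gamma_0:\HA\to\KK\oplus\KK,\qquad \KK=\int^\oplus\fh_\lambda\,d\mu,
\]
with $\Gamma_0|\widehat T|\Gamma_0^{-1}=M$ and $\Gamma_0\widehat J\Gamma_0^{-1}=\Jbf_\KK$, so that $\Gamma_0\widehat T\Gamma_0^{-1}=M\Jbf_\KK$. We put $\Phi_R=\Gamma_0\iota$; this map is bounded since $\|x\|_A\le\|A\|^{1/2}\|x\|$. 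Then $\ip{\Phi_R x}{\Phi_R y}=\ip{A_Rx}{y}$, i.e.\ $A_R=\Phi_R^t\Phi_R$. A computation of $\Phi_R^t$ gives $\Phi_R^t=A_R\iota^{*}\Gamma_0^{-1}$ in the pairing sense. From this, $\Phi_RJ_R\Phi_R^t=\Gamma_0 J_RA_R\Gamma_0^{-1}=M\Jbf_\KK$ on $\dom(\Phi_R^t)$. Setting $\Phi_R'=M^{-1/2}\Phi_R$ yields $(\Phi_R')^tM\Phi_R'=\Phi_R^t\Phi_R=A_R$ and $\Phi_R'J_R(\Phi_R')^t=M^{-1/2}M\Jbf_\KK M^{-1/2}=\Jbf_\KK$, using that $M$ commutes with $\Jbf_\KK$. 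Since $M^{-1/2}$ is closed and $\Phi_R$ is bounded, $\Phi_R'$ is closed (hence closable) on the stated domain. That domain is dense because $\ran M^{1/2}$ is dense and $\Gamma_0$ maps $\iota(\HH_R)$ densely. Injectivity of $\Phi$ follows from the injectivity of $\iota$, $\Gamma_0$ and $M^{-1/2}$. Dense range follows since $M^{-1/2}$ maps $\Gamma_0\iota(\HH_R)\cap\ran M^{1/2}$ onto a dense set; in particular $\Gamma_0\iota A_R(\HH_R)$ lies there.

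Uniqueness follows because $[\mu]$ and $\lambda\mapsto\dim\fh_\lambda$ are the unitary invariants of $|\widehat T|$ (with multiplicity halved by $\widehat J$). Moreover, $|\widehat T|$ is determined by $A$ through the intrinsic construction of $(\HA,\widehat T)$. One then checks that any other representation of the stated form reconstructs $\HA$ and $\widehat T$ up to unitary equivalence.

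The main obstacle is the domain bookkeeping for $\Phi_R^t$ and $\Phi_R'$. The identities hold only on suitable dense domains, and the composition $\Phi_R'J_R(\Phi_R')^t$ must be shown to be densely defined and to agree with $\Jbf_\KK$ there. I would handle this by working on the core $\Gamma_0\iota(A_R\HH_R)$, where every expression is explicit, and then extending by closure.
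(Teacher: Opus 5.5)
\textbf{Gap: density of $\dom(\Phi_R')$ and dense range of $\Phi$.} This is in the unbounded part, which you yourself flag as the main obstacle. Your reason for density of $\dom(\Phi_R')=\{x\in\HH_R:\Phi_Rx\in\ran M^{1/2}\}$ is that $\ran M^{1/2}$ and $\Gamma_0\iota(\HH_R)$ are both dense. That does not follow. The domain is the $\Phi_R$-preimage of $\ran\Phi_R\cap\ran M^{1/2}$, and two dense operator ranges in infinite dimensions can have trivial intersection.

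Worse, the concrete subspace you then rely on, both for the dense range of $\Phi$ and as the ``core'' for the domain bookkeeping, is not contained in the domain in general. Take $\HH=\ell^2\oplus\ell^2$ with the standard $J$ and $A=I\oplus\operatorname{diag}(n^{-2})$. The $n$-th mode $\operatorname{span}\{(e_n,0),(0,e_n)\}$ reduces both $A$ and $J$, and $|\widehat T|=1/n$ on it. $\Phi_R$ maps this mode onto the fibre over $\lambda=1/n$ as $O_n\operatorname{diag}(1,n^{-1})$ with $O_n$ orthogonal. For $x=(1/n)_n\in\ell^2$, the vector $(x,0)=A(x,0)\in A_R\HH_R$ has $\|M^{-1/2}\Phi_R(x,0)\|^2=\sum_n n\cdot n^{-2}=\infty$. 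So $\Gamma_0\iota A_R(\HH_R)\not\subseteq\ran M^{1/2}$.

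\textbf{The fix.} Use the intertwining relation to put a dense subspace inside the domain. The right subspace is $J_RA_R\HH_R=T_R\HH_R$, not $A_R\HH_R$. Since $\iota J_RA_R=\widehat T\iota$ and $\Gamma_0\widehat T=M\Jbf_\KK\Gamma_0$, you get $\Phi_R(J_RA_Rx)=M\Jbf_\KK\Phi_Rx\in\ran M\subseteq\ran M^{1/2}$. Hence $J_RA_R\HH_R\subseteq\dom(\Phi_R')$, and it is dense because $A_R$ has dense range and $J_R$ is orthogonal. Moreover $\Phi_R'(J_RA_Rx)=M^{1/2}\Jbf_\KK\Phi_Rx$ runs over a dense set, because $\Phi_R$ has dense range and $M^{1/2}$ is bounded with dense range; this gives dense range of $\Phi$. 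The same identity $\Phi_RJ_R\Phi_R^t=M\Jbf_\KK$, applied to $M^{-1/2}z$ for $z\in\ran M^{1/2}$, shows $J_R(\Phi_R')^tz=J_R\Phi_R^tM^{-1/2}z\in\dom(\Phi_R')$. You need that before writing $\Phi_R'J_R(\Phi_R')^t=M^{-1/2}M\Jbf_\KK M^{-1/2}$. The paper argues exactly this way, using the dense set $J_R\Phi_R^t(\ran M^{1/2})$.

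\textbf{The rest of your route.} Apart from this, your approach is a legitimate alternative to the paper's. The global polar decomposition $\widehat T=\widehat J|\widehat T|$ replaces the fibrewise choice of complex structures and Lagrangians and the push-forward $s\mapsto s^{1/2}$, and so avoids measurable-selection issues. Your injectivity argument for $\widehat T$, and your observation that $M^{-1/2}\Phi_R$ is closed (closed operator times bounded operator), supply points the paper only asserts or proves in weaker form.

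One caveat: to apply the spectral theorem to $|\widehat T|$ on the half $L$, $L$ must reduce $|\widehat T|$. Run Zorn over closed $|\widehat T|$-reducing subspaces $L$ with $L\perp\widehat JL$. Then, for $0\neq x\perp L\oplus\widehat JL$, the cyclic subspace of $x$ under $|\widehat T|$ is orthogonal to $L\oplus\widehat JL$ and to its own $\widehat J$-image, because each $\widehat Jf(|\widehat T|)$ is skew-adjoint. Adding it to $L$ contradicts maximality.
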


\begin{proof}
We prove the theorem by elaborating the ideas given in Section \ref{secproofoutline}. We construct the transformation $\Phi = \Phi_0 \oplus \Phi_R'$ in two distinct steps:

\smallskip
\noindent\textbf{Step 1: The Kernel Component $\Phi_0$.} \\
Since $J_0^2 = -I_{\HH_0}$ and $J_0^t = -J_0$, Theorem \ref{appen:thm_polar} yields a maximal isotropic closed subspace $\mathfrak{h}_0 \subset \HH_0$ such that 
\[
\HH_0 = \mathfrak{h}_0 \oplus J_0 \mathfrak{h}_0.
\]
Define the map $\Phi_0 : \HH_0 \to \mathfrak{h}_0 \oplus \mathfrak{h}_0$ by $\Phi_0(u + J_0 v) = (u, v)$ for $u, v \in \mathfrak{h}_0$. Then $\Phi_0$ is a bounded orthogonal isomorphism satisfying $\Phi_0^t = \Phi_0^{-1}$. Under $\Phi_0$, we have $\Phi_0 J_0 \Phi_0^t = J_{\mathfrak{h}_0}$, where $J_{\mathfrak{h}_0} = \begin{bmatrix} 0 & I_{\mathfrak{h}_0} \\ -I_{\mathfrak{h}_0} & 0 \end{bmatrix}$. Since $A|_{\HH_0} = 0$, we trivially have
\[
A|_{\HH_0} = \Phi_0^t \cdot 0 \cdot \Phi_0 = 0.
\]

\smallskip
\noindent\textbf{Step 2: The Range Component $\Phi_R'$.} \\
Note that the operator $A_R$ is bounded, positive, and injective operator on $\HH_R$. We work on $\HH_R$ henceforth and adjoin the kernel part at the end.

\begin{itemize}
    \item \underline{\textbf{Step 1:} The $A_R$-inner product space $\HA$ and the Riesz factorisation.} 
    \\ 
    Due to the positivity and injectivity of $A_R$, the bilinear form $\ipA{x}{y} := \ip{A_R x}{y}_{\HH_R}$ is an inner product on $\HH_R$, which need not be complete. Let $\HA$ denote its completion and $\iota: \HH_R\hookrightarrow\HA$ be the inclusion. Since for all $x \in \HH_R, \|\iota(x)\|_A^2 = \ip{A_R x}{x} \leq \|A_R\|\|x\|^2$, the map $\iota$ is bounded. Moreover, for $u\in\HA, y\in\HH_R, |\ipA{u}{\iota(y)}|\leq\|u\|_A\|y\|_A\leq
\|u\|_A\|A_R\|^{1/2}\|y\|$. Hence, the adjoint $\iota^t:\HA\to\HH_R$, defined by $\ip{\iota^t u}{y}_{\HH_R} = \ipA{u}{\iota y}$, for $u\in\HA$,
$y\in\HH_R$, satisfies $\|\iota^t\| \leq \|A_R\|^{1/2}$. Also since $\ip{\iota^t\iota x}{y} = \ipA{x}{y} = \ip{A_R x}{y}$ for all $x,y\in\HH_R$,
\begin{equation}
\label{eq:factor}
  A_R = \iota^t\iota.
\end{equation}
 In addition $\iota$ has a dense range in $\HA$: if $\ipA{u}{x} = 0$ for all $x\in\HH_R$, taking $u\in\HH_R$ gives $\|u\|_A^2 = 0$, so $u = 0$.

\item \underline{\textbf{Step 2: }Extending $T_R = J_R A_R$ to a bounded skew-symmetric injective operator}  \\ \underline{$\widehat{T}$ on $\HA$.} \\
$T_R$ is injective due to the injectivity of $J_R$ and $A_R$ on $\HH_R$. Also since $J_R$ is skew-symmetric on $\HH_R$, we have
\[
  \ipA{T_R x}{y}
  = \ip{A_R J_R A_R x}{y}
  = \ip{J_R A_R x}{A_R y}
  = -\ip{A_R x}{J_R A_R y}
  = -\ipA{x}{T_R y},
\] that is, the operator $T_R = J_RA_R$ is skew-symmetric with respect to $\ipA{\cdot}{\cdot}$. Moreover, since $J_R$ is orthogonal and
$\|A_R x\|^2 \leq \|A_R\|\ip{A_R x}{x} = \|A_R\|\|x\|_A^2$, we have
\[
  \|T_R x\|_A^2
  = \ip{A_R J_R A_R x}{J_R A_R x}
  \leq \|A_R\|\|A_R x\|^2
  \leq \|A_R\|^2\|x\|_A^2,
\] that is $T_R$ is a bounded operator on $\HH_R$. Hence, by the density of $\HH_R$ in $\HA$ and skew-symmetry, $T_R$ extends to $\widehat{T}:\HA\to\HA$ which is bounded, skew-symmetric and injective on $\HA$.

\item \underline{\textbf{Step 3:} Spectral decomposition of $\widehat{S} := -\widehat{T}^2$.}
\\
Since $\widehat{T}$ is bounded, skew-symmetric and injective, the operator $\widehat{S} = (\widehat{T})^t\widehat{T} = -\widehat{T}^2$ is bounded, positive and injective on $\HA$. Thus $\sigma(\widehat{S}) \subseteq [0,\infty)$. Since $\widehat{T}$ is skew-adjoint, it commutes with
$\widehat{S}$:
\begin{equation}
\label{eq:commute}
  \widehat{T}\,\widehat{S}
  = \widehat{T}(-\widehat{T}^2)
  = -\widehat{T}^3
  = (-\widehat{T}^2)\widehat{T}
  = \widehat{S}\,\widehat{T}.
\end{equation}
Now apply the spectral theorem \cite[Theorem~VIII.4]{RS81} and
multiplicity theory \cite[Section~VIII.2]{RS81} to $\widehat{S}$.
There exist a $\sigma$-finite Borel measure $\nu_0$ on $[0,\infty)$
and a measurable bundle $s\mapsto\widetilde{\fh}_s$ with
\begin{equation}
\label{eq:S-decomp}
  \HA \;\cong\;
  \int^\oplus_{[0,\infty)}\widetilde{\fh}_s\,d\nu_0(s),
  \qquad
  \widehat{S} \;\longleftrightarrow\;
  \int^\oplus_{[0,\infty)} s\cdot I_{\widetilde{\fh}_s}\,d\nu_0(s).
\end{equation}
By \eqref{eq:commute} and the commutant theorem for direct integrals
\cite[Theorem~XIII.83]{RS78}, $\widehat{T}$ is decomposable:
\begin{equation}
\label{eq:T-decomp}
  \widehat{T} \;\longleftrightarrow\;
  \int^\oplus_{[0,\infty)} \tau_s\,d\nu_0(s),
\end{equation}
where $\tau_s^t = -\tau_s$ and $-\tau_s^2 = s\cdot
I_{\widetilde{\fh}_s}$ on each fibre.

\item \underline{\textbf{Step 4:} Fibre-level structure and the isomorphism $\Gamma_0$.} \\
Since $s=0$ is an element of the continuous spectrum, the set \(\{0\}\) is null with respect to measure \(\nu_0\). Thus, we are justified in removing it from the domain of our direct integral decomposition henceforth. 
\\ Fix $s>0$. The operator $j_s := s^{-1/2}\tau_s$ satisfies $j_s^t = -j_s$ and $j_s^2= - I_{\widetilde{\fh}_s}$, making it a
\emph{complex structure} on $\widetilde{\fh}_s$. Therefore, we choose any closed subspace $\fh_s\subset\widetilde{\fh}_s$ maximal subject
to $\fh_s\perp j_s\fh_s$ when $s>0$ (see Theorem \ref{appen:thm_polar}). By maximality,
$\widetilde{\fh}_s = \fh_s\oplus j_s\fh_s$, and
\begin{equation}
\label{eq:gamma}
  \gamma_s:\fh_s\oplus\fh_s\xrightarrow{\;\sim\;}\widetilde{\fh}_s,
  \qquad
  \gamma_s(u,v) := u + j_s v,
\end{equation}
is a real orthogonal isomorphism (since $\langle u,j_s v\rangle = 0$ by maximality and $j_s$ is orthogonal). Under $\gamma_s$:
\begin{equation}
\label{eq:fiber-tau}
  \gamma_s^{-1}\,\tau_s\,\gamma_s = s^{1/2}\,J_{\fh_s},
  \qquad
  J_{\fh_s} :=
  \begin{pmatrix}0&I_{\fh_s}\\-I_{\fh_s}&0\end{pmatrix},
\end{equation}
since $\tau_s(u+j_s v) = s^{1/2}j_s(u+j_s v) = s^{1/2}(j_s u-v)$,
giving $\gamma_s^{-1}\tau_s\gamma_s(u,v) = s^{1/2}(-v,u) =
s^{1/2}J_{\fh_s}(u,v)$.

\noindent Now we move to the measure $\mu$. Fix $s\in (0,\infty)$, let $\lambda = s^{1/2}$, push $\nu_0$ forward under $s\mapsto s^{1/2}$ to a measure $\mu$ on $(0,\infty)$, set
$\fh_\lambda := \fh_{\lambda^2}$ and $\gamma_\lambda :=
\gamma_{\lambda^2}$. Assemble the fibrewise isomorphisms into a global real orthogonal isomorphism
\begin{equation}
\label{eq:Gamma0}
  \Gamma_0 : \HA \xrightarrow{\;\sim\;}
  \KK\oplus\KK :=\int^\oplus_{(0,\infty)}
  (\fh_\lambda\oplus\fh_\lambda)\,d\mu(\lambda).
\end{equation}
Under $\Gamma_0$, equations \eqref{eq:S-decomp}--\eqref{eq:T-decomp}
become:
\begin{align}
  \Gamma_0\,\widehat{S}\,\Gamma_0^{-1}
  &= M^2 := \int^\oplus_{(0,\infty)} \lambda^2\cdot
    I_{\fh_\lambda\oplus\fh_\lambda}\,d\mu(\lambda),
  \label{eq:S-fiberwise}\\
  \Gamma_0\,\widehat{T}\,\Gamma_0^{-1}
  &= M_J :=\int^\oplus_{(0,\infty)} \lambda\cdot J_{\fh_\lambda}\,d\mu(\lambda)
    = M\,\Jbf_\KK = \Jbf_\KK\,M,
  \label{eq:T-fiberwise}
\end{align}
where $M =\int^\oplus_{(0,\infty)} \lambda\cdot I_{\fh_\lambda\oplus\fh_\lambda}
\,d\mu(\lambda)$, and the commutativity $M\Jbf_\KK = \Jbf_\KK M$
holds fibrewise since both are diagonal in the direct integral
structure.

\item \underline{\textbf{Step 5:} Construction of $\Phi_R$.}
\\
Define the isometry
\begin{equation}
\label{eq:PhiR-def}
  \Phi_R := \Gamma_0\circ\iota : \HH_R \longrightarrow \KK\oplus\KK
\end{equation}
Since $\iota:\HH_R\to\HA$ and $\Gamma_0:\HA\to\KK\oplus\KK$ are both bounded, $\Phi_R$ is bounded. Since $\iota$ has dense range in
$\HA$ and $\Gamma_0$ is an isomorphism, $\Phi_R$ has dense range. Since $\iota$ is injective (as $\|\iota x\|_A = \|x\|_A > 0$ for $x\neq 0$) and $\Gamma_0$ is injective, $\Phi_R$ is injective. Next we show that $\Phi_R^t$ is densely defined. For $z = \Phi_R y
\in \ran(\Phi_R)$ and any $x\in\HH_R$:
\[
  |{\ipK{z}{\Phi_R x}}|
  = |\ipK{\Phi_R y}{\Phi_R x}|
  = |\ipA{y}{x}|
  \leq \|y\|_A\|x\|_A
  \leq \|y\|_A\|A_R\|^{1/2}\|x\|,
\]
so $x\mapsto\ipK{z}{\Phi_R x}$ is bounded for each
$z\in\ran(\Phi_R)$. Hence $\ran(\Phi_R)\subseteq\dom(\Phi_R^t)$.
Since $\ran(\Phi_R)$ is dense in $\KK$, $\dom(\Phi_R^t)$
is dense.
 Next we prove that $A_R = \Phi_R^t\Phi_R$. For all $x,y\in\HH_R$,
\begin{align*}
  \ip{\Phi_R^t\Phi_R x}{y}_{\HH_R}
  &= \ipK{\Phi_R x}{\Phi_R y}
   = \ipK{\Gamma_0\iota x}{\Gamma_0\iota y} \\
   &= \ipA{\iota x}{\iota y}
   = \ipA{x}{y} \\
   &= \ip{A_R x}{y}_{\HH_R}
\end{align*}
$\Rightarrow \Phi_R^t\Phi_R = A_R$.
\\
Now for $y = \Phi_R x\in\ran(\Phi_R)$, note that $\Phi_R^t y =
\Phi_R^t\Phi_R x = A_R x$. Then, using \eqref{eq:T-fiberwise}
in the form $\Gamma_0\widehat{T} = M_J\Gamma_0$:
\begin{align*}
  \Phi_R J_R\Phi_R^t y
  &= \Phi_R J_R A_R x
   = \Gamma_0\iota(J_R A_R x)
   = \Gamma_0\widehat{T}(\iota x) \\
   &= M_J\Gamma_0\iota x
   = M_J\Phi_R x \\
   &= M\Jbf_\KK y.
\end{align*}
Since $\ran(\Phi_R)$ is dense in $\KK$ and both sides
are continuous (the right side is bounded since $M$ and $\Jbf_\KK$ are bounded), the above identity extends to all of $\KK$, that is, $\Phi_R J_R\Phi_R^t = M\Jbf_\KK$.

\item \underline{\textbf{Step 6:} The equivalent Williamson's normal form}
\\
Define $\Phi_R^\prime := M^{-1/2}\Phi_R$. Then the domain
\[
  \dom(\Phi_R^\prime) := \{x\in\HH_R : \Phi_R x\in\ran(M^{1/2})\}.
\]
is dense. This can be seen as follows. Define the subspace $
\mathcal{D} := J_R \Phi_R^t \bigl(\operatorname{ran}(M^{1/2})\bigr).$ We first show that $\mathcal{D} \subseteq \operatorname{dom}(\Phi_R^\prime)$. Let $x \in \mathcal{D}$. Then $x = J_R \Phi_R^t M^{1/2} z$ for some $z \in \mathcal{K} \oplus \mathcal{K}$. Using the symplectic identity $\Phi_R J_R \Phi_R^t = M J_{\mathcal{K}}$ and the commutativity of $M$ and $J_{\mathcal{K}}$, we obtain:
$$
\Phi_R x = \Phi_R J_R \Phi_R^t M^{1/2} z = M J_{\mathcal{K}} M^{1/2} z = M^{1/2} (M J_{\mathcal{K}} z).
$$
Since $M J_{\mathcal{K}} z \in \mathcal{K} \oplus \mathcal{K}$, it follows that $\Phi_R x \in \operatorname{ran}(M^{1/2})$, and hence $x \in \operatorname{dom}(\Phi_R^\prime)$. Thus,
\begin{equation} \label{eq:D_subset_dom}
    \mathcal{D} \subseteq \operatorname{dom}(\Phi_R^\prime).
\end{equation}
Now it remains to show that $\mathcal{D}$ is dense in $H_R$. Since $M$ is positive and injective, $M^{1/2}$ is also injective, so
$
\overline{\operatorname{ran}(M^{1/2})} = (\ker M^{1/2})^\perp = \mathcal{K} \oplus \mathcal{K},
$ which implies that $\operatorname{ran}(M^{1/2})$ is dense in $\mathcal{K} \oplus \mathcal{K}$. Because $\Phi_R^t$ is a bounded linear operator, the image of a dense set under $\Phi_R^t$ is dense in $\operatorname{ran}(\Phi_R^t)$, so
\[
\overline{\Phi_R^t \bigl(\operatorname{ran}(M^{1/2})\bigr)} = \overline{\operatorname{ran}(\Phi_R^t)}.
\]
Furthermore, for any bounded operator between Hilbert spaces, $\overline{\operatorname{ran}(\Phi_R^t)} = (\ker \Phi_R)^\perp$. By the injectivity of $\Phi_R$, we have $\ker \Phi_R = \{0\}$, which yields
\[
\overline{\operatorname{ran}(\Phi_R^t)} = H_R.
\]
Finally, since $J_R$ is orthogonal, it preserves closures of dense subspaces, that is,
\[
\overline{\mathcal{D}} = \overline{J_R \Phi_R^t \bigl(\operatorname{ran}(M^{1/2})\bigr)} = J_R \overline{\Phi_R^t \bigl(\operatorname{ran}(M^{1/2})\bigr)} = J_R H_R = H_R.
\]
Combining this with \eqref{eq:D_subset_dom}, we conclude that $\overline{\operatorname{dom}(\Phi_R^\prime)} = H_R$, establishing that $\Phi_R^\prime$ is densely defined.
\\
Next we prove that the operator $\Phi_R^\prime$ is closable. Since $(M^{-1/2}\Phi_R)^t\supseteq\Phi_R^t M^{-1/2}$, we have $\dom((\Phi_R^\prime)^t)
\supseteq \dom(M^{-1/2}) = \ran(M^{1/2})$, which is dense. Hence $(\Phi_R^\prime)^t$ is densely defined and $\Phi_R^\prime$ is closable. 
\\ Finally, we observe the following. For $x,y\in\dom(\Phi_R^\prime)$, using
self-adjointness of $M^{-1/2}$ and $M^{-1/2}MM^{-1/2} = I$ on $\ran(M^{1/2})$:
\begin{align*}
  \ip{(\Phi_R^\prime)^t M\Phi_R^\prime x}{y}
  &= \ipK{M\Phi_R^\prime x}{\Phi_R^\prime y}
   = \ipK{M\cdot M^{-1/2}\Phi_R x}{M^{-1/2}\Phi_R y}\\
  &= \ipK{\Phi_R x}{M^{-1/2}MM^{-1/2}\Phi_R y} \\
  &= \ipK{\Phi_R x}{\Phi_R y} = \ipK{\Phi_R^t \Phi_Rx}{y} \\
  &= \ipK{A_Rx}{y}
\end{align*}
$\Rightarrow (\Phi_R^\prime)^t M\Phi_R^\prime = A_R$. For symplecticity, take $x\in\dom(M^{-1/2}) = \ran(M^{1/2})$. Then,  
\begin{align}
   M^{-1/2}\Phi_R J_R\Phi_R^t M^{-1/2}x
   = M^{-1/2}(M\Jbf_\KK)M^{-1/2}x.\notag
\end{align}
Since $M^{-1/2}M = M^{1/2}$ and $\Jbf_\KK$ commutes with $M$:
\begin{align}
  M^{-1/2}(M\Jbf_\KK)M^{-1/2}
  &= M^{-1/2}M^{1/2}\Jbf_\KK M^{1/2}M^{-1/2}
   = \Jbf_\KK
   \qquad\text{on }\ran(M^{1/2}).
  \label{eq:form1-symp-clean}
\end{align}
Hence $\Phi_R^\prime J_R(\Phi_R^\prime)^t = \Jbf_\KK$ on the dense domain
$\ran(M^{1/2})\subseteq\dom(\Phi_R^t)$.

\item \underline{\textbf{Step 8:} Uniqueness.}
\\
Suppose that
\[
\widetilde{\Phi}_R:H_R\longrightarrow\widetilde{\mathcal K}
\]
is another bounded operator with dense range satisfying
\[
\widetilde{\Phi}_R^t\widetilde{\Phi}_R=A_R,
\qquad
\widetilde{\Phi}_R J_R\widetilde{\Phi}_R^t
=
\widetilde M J_{\widetilde{\mathcal K}},
\]
where
\[
\widetilde M
=
\int_{(0,\infty)}^\oplus
\lambda I_{\widetilde{\mathfrak h}_\lambda}
\,d\widetilde\mu(\lambda).
\]

Define
\[
U_0:\operatorname{ran}(\Phi_R)
\longrightarrow
\operatorname{ran}(\widetilde{\Phi}_R),
\qquad
U_0(\Phi_Rx):=\widetilde{\Phi}_Rx.
\]
Since $\Phi_R$ is injective, $U_0$ is well-defined. Moreover,
\[
\begin{aligned}
\|U_0\Phi_Rx\|_{\widetilde{\mathcal K}}^2
&=
\|\widetilde{\Phi}_Rx\|_{\widetilde{\mathcal K}}^2\\
&=
\langle
\widetilde{\Phi}_R^t\widetilde{\Phi}_Rx,x
\rangle_{H_R}\\
&=
\langle A_Rx,x\rangle_{H_R}\\
&=
\langle
\Phi_R^t\Phi_Rx,x
\rangle_{H_R}\\
&=
\|\Phi_Rx\|_{\mathcal K}^2.
\end{aligned}
\]
Hence $U_0$ is an isometry. Since both $\operatorname{ran}(\Phi_R)$ and
$\operatorname{ran}(\widetilde{\Phi}_R)$ are dense, $U_0$ extends uniquely to a unitary operator
\(
U:\mathcal K\longrightarrow\widetilde{\mathcal K}.
\)
By construction,
\(
\widetilde{\Phi}_R=U\Phi_R.
\)
Consequently,
$$
\widetilde M J_{\widetilde{\mathcal K}} =
\widetilde{\Phi}_R J_R\widetilde{\Phi}_R^t =
U\Phi_RJ_R\Phi_R^tU^t = U(MJ_{\mathcal K})U^t.
$$
Since $M$ commutes with $J_{\mathcal K}$,
$J_{\mathcal K}^t=-J_{\mathcal K}$, and
$J_{\mathcal K}^2=-I$, we have
$$
(MJ_{\mathcal K})^t(MJ_{\mathcal K}) =
(-J_{\mathcal K}M)(MJ_{\mathcal K}) =
-J_{\mathcal K}M^2J_{\mathcal K} = M^2.
$$ Therefore \(
|MJ_{\mathcal K}|=M. \) Similarly,
\(
|\widetilde M J_{\widetilde{\mathcal K}}|
=
\widetilde M.
\) Using the unitary invariance of the functional calculus we have,
\[
\widetilde M
=
|\widetilde M J_{\widetilde{\mathcal K}}|
=
U|MJ_{\mathcal K}|U^t
=
UMU^t.
\]
Thus $M$ and $\widetilde M$ are unitarily equivalent. By the uniqueness of spectral measure class and multiplicity function for self-adjoint operators
\cite[Section~VIII.2]{RS81}, $[\mu] = [\widetilde\mu]$ and $\dim\fh_\lambda = \dim\widetilde\fh_\lambda$ $\mu$-a.e.
\end{itemize}

\smallskip
\noindent\textbf{Step 3: Direct Sum Construction on $\mathcal{H} = \HH_0 \oplus \mathcal{H}_R$.} \\
Define $\mathcal{K}_{\mathrm{tot}} = (\mathfrak{h}_0 \oplus \mathfrak{h}_0) \oplus (\mathcal{K} \oplus \mathcal{K})$ and the operator 
\[
\Phi = \Phi_0 \oplus \Phi_R' : \HH_0 \oplus \operatorname{dom}(\Phi_R') \subset \mathcal{H} \longrightarrow \mathcal{K}_{\mathrm{tot}}.
\]
Since $\Phi_0$ is bounded and $\operatorname{dom}(\Phi_R')$ is dense in $\mathcal{H}_R$, $\operatorname{dom}(\Phi) = \HH_0 \oplus \operatorname{dom}(\Phi_R')$ is dense in $\mathcal{H}$. Taking the transpose (adjoint) in block-diagonal form, $\Phi^t = \Phi_0^t \oplus (\Phi_R')^t$.

Combining the kernel and range results:
\[
\Phi^t M_A \Phi = \begin{bmatrix} \Phi_0^t & 0 \\ 0 & (\Phi_R')^t \end{bmatrix} \begin{bmatrix} 0 & 0 \\ 0 & M \end{bmatrix} \begin{bmatrix} \Phi_0 & 0 \\ 0 & \Phi_R' \end{bmatrix} = \begin{bmatrix} 0 & 0 \\ 0 & (\Phi_R')^t M \Phi_R' \end{bmatrix} = 0 \oplus A_R = A.
\]
Similarly, for the symplectic structure:
\[
\Phi J \Phi^t = \begin{bmatrix} \Phi_0 J_0 \Phi_0^t & 0 \\ 0 & \Phi_R' J_R (\Phi_R')^t \end{bmatrix} = \begin{bmatrix} J_{\mathfrak{h}_0} & 0 \\ 0 & J_{\mathcal{K}} \end{bmatrix} = J_{\mathcal{K}_{\mathrm{tot}}}.
\]
Injectivity and density of $\mathrm{ran}(\Phi)$ follow directly from the injectivity and dense ranges of $\Phi_0$ and $\Phi_R^\prime$. 
\end{proof}

\begin{remark}[The two forms and the $A$-inner product]
\label{rem:two-forms}
The proof makes explicit why the two forms of the normal form are
not equivalent choices of presentation but reflect two different
isometric structures.
\end{remark}

\begin{remark}[Finite-dimensional result]
\label{rem:finite-dim}
When $\mathcal{H}$ is finite-dimensional, $\mu = \sum_i
m_i\delta_{\lambda_i}$ with $\lambda_i$ the symplectic eigenvalues and $m_i$ their multiplicities and $\delta_{\lambda_i}$ is the Dirac delta measure concentrated at $\lambda_i$. The operators decompose to
$A\leftrightarrow \bigoplus_i(\lambda_i I_{2m_i})$ under a symplectic operator \(S\), recovering the
classical Williamson's normal form.
\end{remark}

\section{Simultaneous Williamson's Normal Form}
\label{sec:sim-wnf}

\subsection{Discussion on the main result and outline of the proof}
\label{sec:sim-proofoutline}

We now extend the direct-integral Williamson's normal form established in Section~\ref{sec:direct-integral} to a pair of commuting positive operators. Let $A$ and $B$ be bounded positive operators on $\HH$ satisfying hypotheses (H1) and (H2) below. The joint kernel $\HH_0 = \ker(A)\cap\ker(B)$ is $J$-invariant by (H1), so $J=J_0\oplus J_R$ where $J_R=J|_{\HH_R}$ and $\HH_R=\HH_0^\perp$. The central object is the Hilbert space $\HC$ obtained by completing $\HH_R$ in the inner product induced by $C_R:=A_R+B_R$. The operators $T_A:=J_RA_R$, $T_B:=J_RB_R$, and $T_C:=J_RC_R$ extend to bounded skew-symmetric operators on $\HC$. By (H2), they commute and are simultaneously diagonalizable. The spectral theorem applied to $\widehat{S}_C:=-\widehat{T}_C^2$ yields a direct integral decomposition in which the fibre-level analysis reduces to diagonalizing commuting positive operators $a_\lambda$ and $b_\lambda$ satisfying $a_\lambda+b_\lambda=\lambda I$. A second-stage diagonalization of $a_\lambda$ within each fibre yields the final common spectral representation. The symplectic transformation $\Phi$ is then constructed as an isometry from $(\HH_R,\ipC{\cdot}{\cdot})$ to a direct integral space, followed by a scaling that incorporates the symplectic spectrum. The final result, Theorem~\ref{thm:sim-wnf}, provides a simultaneous symplectic representation of the form
\[
A=\Phi^t M_A\Phi,
\qquad
B=\Phi^t M_B\Phi,
\qquad
\Phi J\Phi^t=J_{\mathcal K_{\mathrm{tot}}},
\]
where $M_A$ and $M_B$ are multiplication operators on a common spectral representation space.

\subsection{Preliminaries}

Let $A$ and $B$ be two bounded positive operators on $\HH$. We define the joint kernel and its orthogonal complement as follows:
\begin{align}
  \HH_0 &:= \ker(A) \cap \ker(B), \label{eq:H0}\\
  \HH_R &:= \HH_0^\perp = \overline{\ran(A) + \ran(B)}. \label{eq:HR}
\end{align}
Restrictions to $\HH_R$ are denoted by
\[
A_R := A|_{\HH_R},\quad B_R := B|_{\HH_R},\quad 
J_R := J|_{\HH_R},\quad C_R := A_R + B_R.
\]
The core of the main result lies in the following two hypotheses.
\begin{enumerate}
  \item[(H1)] \textbf{$J$-invariance of the joint kernel.} 
    $J(\HH_0) \subseteq \HH_0$. 
    
  \item[(H2)] \textbf{Commutativity condition.} 
    The operators
    \[
    T_A := J_R A_R,\qquad T_B := J_R B_R
    \]
    commute, that is, $T_A T_B = T_B T_A$.  
\end{enumerate}
As a consequence, $J=J_0\oplus J_R$ where $J_0:=J|_{\HH_0}$ and $J_R:=J|_{\HH_R}$, and $C_R$ is bounded, positive, and injective on $\HH_R$. Moreover, $T_C:=J_RC_R=T_A+T_B$ and $T_A,T_B$ each commute with $T_C$.

Proceeding as in the proof of Theorem~\ref{thm:di-wnf}, the bilinear form
\[
\ipC{x}{y} := \ip{C_R x}{y}_{\HH_R}, \qquad x,y \in \HH_R,
\]
is an inner product on $\HH_R$. Let $\HC$ be its completion, and let $\iota : \HH_R \hookrightarrow \HC$ be the inclusion. The adjoint $\iota^t : \HC \to \HH_R$ satisfies $C_R = \iota^t \iota$.

\subsection{Main theorem}
\label{sec:theorem}

\begin{theorem}[Simultaneous Williamson's normal form]
\label{thm:sim-wnf}
Let $A, B \in \mathcal{B}(\HH)$ be bounded positive operators on the 
real symplectic Hilbert space $(\HH, J)$ satisfying (H1) and (H2).  
Then there exist a $\sigma$-finite Borel measure $\mu_{\mathrm{tot}}$ on an index space $\Lambda_{\mathrm{tot}} = \{*\} \sqcup \Lambda_+$, where $\Lambda_+ \subseteq (0,\infty)\times[0,1]$, a measurable Hilbert space bundle $\lambda \mapsto \fh_\lambda$ over $\Lambda_{\mathrm{tot}}$, a direct integral Hilbert space
\[
\KKtot:=
(\KK_0\oplus\KK_0)\oplus
\int^\oplus_{\Lambda_+} (\fh_{(\lambda,\alpha)}\oplus\fh_{(\lambda,\alpha)}) \, d\mu_+(\lambda,\alpha),
\]
with $\KK_0:=\fh_*$, fibrewise symplectic operator
\[
\Jbf_{\KKtot} := \int^\oplus_{\Lambda_{\mathrm{tot}}} 
\begin{pmatrix} 0 & -I_{\fh_\lambda} \\ I_{\fh_\lambda} & 0 \end{pmatrix}
d\mu_{\mathrm{tot}}(\lambda),
\]
and multiplication operators
\[
M_A := \int^\oplus_{\Lambda_{\mathrm{tot}}} \alpha(\lambda) I_{\fh_\lambda\oplus\fh_\lambda} \, d\mu_{\mathrm{tot}}(\lambda),\qquad
M_B := \int^\oplus_{\Lambda_{\mathrm{tot}}} \beta(\lambda) I_{\fh_\lambda\oplus\fh_\lambda} \, d\mu_{\mathrm{tot}}(\lambda),
\]
where $\alpha(*)=\beta(*)=0$ and for $(\lambda,t)\in\Lambda_+$, $\alpha=t\lambda$, $\beta=(1-t)\lambda$, such that there exists a closed, densely defined operator
\[
\Phi : \dom(\Phi) \subset \HH \longrightarrow \KKtot
\]
satisfying on $\dom(\Phi)$:
\begin{align}
  A &= \Phi^t M_A \Phi, \label{eq:nf-A}\\
  B &= \Phi^t M_B \Phi, \label{eq:nf-B}\\
  \Phi J \Phi^t &= \Jbf_{\KKtot}. \label{eq:symp1}
\end{align}
Moreover:
\begin{enumerate}
  \item[(i)] $\Phi$ is injective and has dense range.
  \item[(ii)] $\dom(\Phi) = \HH_0 \oplus \dom(\Phi_R^\prime)$ where 
    $\Phi|_{\HH_0}$ is bounded and $\dom(\Phi_R^\prime)$ is dense in $\HH_R$.
  \item[(iii)] The joint measure class $[\mu_{\mathrm{tot}}]$ and the 
    multiplicity function $\lambda \mapsto \dim\fh_\lambda$ are uniquely 
    determined by $(A,B)$.
\end{enumerate}
\end{theorem}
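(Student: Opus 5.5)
We follow the two-stage strategy of the outline in Section~\ref{sec:sim-proofoutline} and reuse the machinery of the proof of Theorem~\ref{thm:di-wnf}, with $C_R=A_R+B_R$ playing the role of $A_R$.

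\textbf{Kernel part.} On $\HH_0=\ker A\cap\ker B$, which is $J$-invariant by (H1), we apply Theorem~\ref{appen:thm_polar} exactly as in Step~1 of the proof of Theorem~\ref{thm:di-wnf}. This gives $\KK_0=\fh_*$ with $\HH_0=\fh_*\oplus J_0\fh_*$ and an orthogonal isomorphism $\Phi_0$. We assign the atom $*$ the values $\alpha(*)=\beta(*)=0$. The sign convention in $\Jbf_{\KKtot}$ is matched by defining $\Phi_0(u+J_0v)=(u,-v)$ if needed; the same swap is used fibrewise below.

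\textbf{Range part, first stage.} On $\HH_R$ the operator $C_R$ is positive and injective, since $\ker C_R=\ker A_R\cap\ker B_R=\{0\}$ by positivity. So $\HC$, $\iota$ and $C_R=\iota^t\iota$ are available. As in Step~2 of the proof of Theorem~\ref{thm:di-wnf}, $T_C$ extends to a bounded skew-symmetric injective operator $\widehat T_C$ on $\HC$. We must also check that $T_A$ and $T_B$ extend boundedly. Since $\ipC{T_Ax}{y}=\ip{C_RJ_RA_Rx}{y}$, this is not immediate. We first use (H2): $T_AT_C=T_CT_A$ gives $J_RA_RJ_RC_R=J_RC_RJ_RA_R$, hence $A_RJ_RC_R=C_RJ_RA_R$. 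Then
\[
\ipC{T_Ax}{y}=\ip{C_RJ_RA_Rx}{y}=\ip{A_RJ_RC_Rx}{y}=-\ip{C_Rx}{J_RA_Ry}=-\ipC{x}{T_Ay},
\]
so $T_A$ is $C$-skew-symmetric. For boundedness we use $0\le A_R\le C_R$. This yields $\|A_Rx\|^2$-type estimates of the form $\ip{C_RJ_RA_Rx}{J_RA_Rx}\le\|C_R\|\,\|A_Rx\|^2$ and $\|A_Rx\|^2\le\|A_R\|\ip{A_Rx}{x}\le\|A_R\|\|x\|_C^2$. Hence $\widehat T_A$ and $\widehat T_B=\widehat T_C-\widehat T_A$ exist and commute.

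We then apply the spectral theorem to $\widehat S_C=-\widehat T_C^2$, as in Steps~3--4, and obtain the fibres $\widetilde\fh_s$ and the decomposition $\widehat T_C=\int^\oplus\tau_s$ with $\tau_s=s^{1/2}j_s$. Since $\widehat T_A$ commutes with $\widehat S_C$, it decomposes as $\int^\oplus\sigma_s$ with $\sigma_s$ skew, commuting with $\tau_s$, hence with $j_s$. Write $\lambda=s^{1/2}$ and set $a_\lambda:=-j_s\sigma_s$, $b_\lambda:=\lambda I-a_\lambda$. Because $\sigma_s$ commutes with $j_s$, the operator $a_\lambda$ is symmetric and commutes with $j_s$. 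It is positive and satisfies $a_\lambda\le\lambda I$; this transfers $0\le A\le C$ to the fibres via $\ipC{\widehat T_A u}{\widehat T_C^{-1}u}$-type identities.

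\textbf{Range part, second stage.} In each fibre, $a_\lambda/\lambda$ is a positive contraction commuting with the complex structure $j_s$. We diagonalize it measurably in $\lambda$ by a second spectral decomposition with parameter $t\in[0,1]$. Concretely, we apply the joint spectral theorem to the commuting pair $(\widehat S_C,\ \widehat T_C^{-1}\widehat T_A)$, or more safely to the bounded pair $(\widehat S_C,\ -\widehat J\widehat T_A)$ with $\widehat J$ the polar part of $\widehat T_C$. This gives a joint measure $\mu_+$ on $\Lambda_+\subseteq(0,\infty)\times[0,1]$ and fibres on which $j$ still acts as a complex structure. We then choose maximal $j$-orthogonal subspaces $\fh_{(\lambda,t)}$ by Theorem~\ref{appen:thm_polar}, measurably in $(\lambda,t)$, and assemble them into $\Gamma:\HC\to\int^\oplus(\fh\oplus\fh)$. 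Under $\Gamma$, $\widehat T_C\mapsto\lambda\Jbf$ and $\widehat T_A\mapsto t\lambda\Jbf$.

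We put $\Phi_R=\Gamma\iota$. As in Step~5, this gives $\Phi_R^t\Phi_R=C_R$ and $\Phi_RJ_R\Phi_R^t=M_C\Jbf$. The same computation with $\widehat T_A$ gives $\Phi_R^tM_C^{-1}M_A\Phi_R=A_R$ on suitable vectors, using $A_R=-J_R\iota^t\widehat T_A\iota$ and $J_R^2=-I$. We then set $\Phi_R'=M_C^{-1/2}\Phi_R$ and repeat the density and closability arguments of Step~6 verbatim to get \eqref{eq:nf-A}, \eqref{eq:nf-B} and \eqref{eq:symp1}. Combining with $\Phi_0$ gives (i) and (ii).

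\textbf{Uniqueness (iii).} We copy Step~8. Given another such $\widetilde\Phi$, the map $U\Phi_Rx\mapsto\widetilde\Phi_Rx$ is unitary, since both factorize $C_R$. It intertwines $M_C\Jbf$ and $M_A\Jbf$ with their counterparts. Taking absolute values shows that $U$ intertwines the commuting pair $(M_C,M_A)$. The uniqueness of joint spectral measure class and multiplicity then gives (iii).

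\textbf{Main obstacle.} The hardest part is the second stage: making the fibrewise diagonalization measurable and preserving the complex structure, while ensuring that the extension of $T_A$ to $\HC$ is well defined and bounded. I would handle both globally, via the joint spectral theorem for the commuting self-adjoint pair $(\widehat S_C,-\widehat J\widehat T_A)$ and its commutant $\widehat J$, rather than fibre by fibre.
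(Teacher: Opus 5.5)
Your proposal follows the paper's proof in all essentials. The shared ingredients are: the $C_R$-completion $\HC$; the bounded $C$-skew extensions $\widehat T_A,\widehat T_B,\widehat T_C$; the spectral decomposition of $\widehat S_C$ with $\widehat T_A$ decomposable; the fibre operators $a_\lambda=-j\sigma$; the factorisation $\Phi_R=\Gamma\iota$ with $\Phi_R^t\Phi_R=C_R$ and $\Phi_RJ_R\Phi_R^t=M_C\Jbf$; the rescaling by $M_C^{-1/2}$; and a Step~8-type uniqueness argument.

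The one organisational difference is in the second stage. You diagonalise globally, via the joint spectral theorem for the commuting bounded pair $(\widehat S_C,-\widehat J\widehat T_A)$, and polarise afterwards. The paper instead diagonalises each $a_\lambda$ separately and sets $d\mu_+=d\nu_\lambda\,d\mu_1$. Your version produces $\mu_+$ on $\Lambda_+$ directly, and so avoids the measurable dependence of $\nu_\lambda$ on $\lambda$ that the paper takes for granted.

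You also prove things the paper only asserts with ``similarly'', notably the $C$-skew-symmetry of $T_A$ via $C_RJ_RA_R=A_RJ_RC_R$. For positivity of $a_\lambda$, a cleaner route avoids $\widehat T_C^{-1}$ altogether. The operator $-\widehat T_A\widehat T_C$ is bounded and symmetric, equals $\lambda a_\lambda$ fibrewise, and satisfies $\ipC{-T_AT_Cx}{x}=\ip{A_RJ_RC_Rx}{J_RC_Rx}\ge0$. The same argument with $B$ gives $a_\lambda\le\lambda$.

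One step is wrong as written. You have $-J_R\iota^t\widehat T_A\iota=-J_RC_RJ_RA_R=-T_AT_C$, which is not $A_R$ in general: for $B=0$ and $A=2I$ on $\mathbb{R}\oplus\mathbb{R}$ it equals $4I$. The identity $\Phi_R^tM_C^{-1}M_A\Phi_R=A_R$ is nevertheless true, but it needs (H2) once more rather than a repeat of Step~5.

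Put $X:=\Gamma^{-1}M_C^{-1}M_A\Gamma$, which is bounded, symmetric, and satisfies $X\widehat T_C=\widehat T_A$. For $y=J_RC_Rz$ we have $\iota y=\widehat T_C\iota z$, hence $X\iota y=\iota T_Az$, and
\[
\ip{\Phi_R^tM_C^{-1}M_A\Phi_Rx}{y}=\ipC{\iota x}{X\iota y}=\ip{C_Rx}{J_RA_Rz}=\ip{x}{C_RJ_RA_Rz}=\ip{x}{A_RJ_RC_Rz}=\ip{A_Rx}{y}.
\]
Such $y$ are dense in $\HH_R$, since $C_R$ is injective and self-adjoint and $J_R$ is orthogonal. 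Both sides are continuous in $y$, so the identity holds everywhere. This is the verification the paper gives in Stage~V.

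Two small additions. First, the theorem asserts that $\Phi$ is closed, not merely closable, but you only invoke the closability argument of Step~6. Closedness is automatic here, because $M_C^{-1/2}$ is closed and $\Phi_R$ is bounded, so $M_C^{-1/2}\Phi_R$ on its natural domain is closed. Second, both you and the paper assert a measurable choice of the $\fh_{(\lambda,t)}$ without proof. It can be obtained by regarding $(\HC,\widehat J)$ as a complex Hilbert space, applying the complex joint spectral theorem, and taking real spans of a measurable field of orthonormal bases.
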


\begin{proof}
The proof has five stages.

\underline{\textbf{Stage I:} Extending $T_A, T_B, T_C$ to $\HC$.}
\\
For $x \in \HH_R$,
\[
\|T_C x\|_C^2 = \ipC{J_R C_R x}{J_R C_R x}
= \ip{C_R J_R C_R x}{J_R C_R x}_{\HH_R} \leq \|C_R\|^2 \|x\|_C^2,
\]
so $T_C$ is bounded. Also, for $x,y \in \HH_R$,
\[
\ipC{T_C x}{y} = \ip{C_R J_R C_R x}{y}_{\HH_R}
= -\ipC{x}{T_C y},
\]
so $T_C$ is skew-symmetric. Hence, by the density of $\HH_R$ in $\HC$, $T_C$ extends to a bounded, skew-symmetric, and injective operator $\widehat{T}_C$ on $\HC$. Similarly, using $A_R \leq C_R$ and $B_R \leq C_R$, $T_A$ and $T_B$ extend to bounded operators $\widehat{T}_A$ and $\widehat{T}_B$ on $\HC$. By linearity, density, and the commutativity hypothesis, we have
\[
\widehat{T}_A + \widehat{T}_B = \widehat{T}_C,\qquad
\widehat{T}_A \widehat{T}_B = \widehat{T}_B \widehat{T}_A,
\]
and $\widehat{T}_A,\widehat{T}_B$ commute with $\widehat{S}_C := -\widehat{T}_C^2$.

\underline{\textbf{Stage II:} Spectral decomposition of $\widehat{S}_C$.}
\\
Define $\widehat{S}_C := -\widehat{T}_C^2 = \widehat{T}_C^t\widehat{T}_C$. This is a bounded, positive, self-adjoint, injective operator on $\HC$ with spectrum in $[0,\infty)$. Applying the spectral theorem \cite[Theorem~VIII.4]{RS81} and multiplicity theory \cite[Section~VIII.2]{RS81} to $\widehat{S}_C$, there exist a $\sigma$-finite Borel measure $\mu_1$ on $[0,\infty)$ and an orthogonal isomorphism
\[
\Gamma_0 : \HC \xrightarrow{\sim} 
\KK := \int^\oplus_{[0,\infty)} \widetilde{\fh}_\lambda \, d\mu_1(\lambda)
\]
such that $\Gamma_0 \widehat{S}_C \Gamma_0^{-1} = \Lambda_C^2$ (multiplication by $\lambda^2$). Since $\widehat{T}_A$ and $\widehat{T}_B$ commute with $\widehat{S}_C$, they are decomposable by the commutant theorem for direct integrals \cite[Theorem~XIII.83]{RS78}:
\[
\Gamma_0 \widehat{T}_A \Gamma_0^{-1} = \int^\oplus_{[0,\infty)} \tau^\lambda_A \, d\mu_1(\lambda),\qquad
\Gamma_0 \widehat{T}_B \Gamma_0^{-1} = \int^\oplus_{[0,\infty)} \tau^\lambda_B \, d\mu_1(\lambda).
\]
On each fibre, $\tau^\lambda_C=\tau^\lambda_A+\tau^\lambda_B$, $[\tau^\lambda_A,\tau^\lambda_B]=0$, and $(\tau^\lambda_C)^t\tau^\lambda_C=\lambda^2 I$.

\underline{\textbf{Stage III:} Fibre-level analysis and splitting.}
\\
As before, we omit the $\{0\}$ from our domain since it has measure zero. Fix $\lambda>0$ and set $j_\lambda := \lambda^{-1}\tau^\lambda_C$. From $(\tau^\lambda_C)^t\tau^\lambda_C = \lambda^2 I$ and $(\tau^\lambda_C)^t = -\tau^\lambda_C$, we have $j_\lambda^t=-j_\lambda$ and $j_\lambda^2=-I_{\widetilde{\fh}_\lambda}$, so $j_\lambda$ is a complex structure. By the standard complex structure polarisation (Theorem~\ref{appen:thm_polar}), there exists a splitting $\widetilde{\fh}_\lambda = \fh_\lambda \oplus j_\lambda \fh_\lambda$ and an isomorphism $\gamma_\lambda : \fh_\lambda \oplus \fh_\lambda \xrightarrow{\sim} \widetilde{\fh}_\lambda$ such that $\gamma_\lambda^{-1} j_\lambda \gamma_\lambda = J_{\fh_\lambda}$. Define $a_\lambda := -j_\lambda \tau^\lambda_A$ and $b_\lambda := -j_\lambda \tau^\lambda_B$. Then
\[
\gamma_\lambda^{-1} \tau_A^\lambda \gamma_\lambda = J_{\fh_\lambda}a_\lambda,\qquad
\gamma_\lambda^{-1} \tau_B^\lambda \gamma_\lambda = J_{\fh_\lambda}b_\lambda,
\]
and $a_\lambda,b_\lambda$ are bounded, positive, self-adjoint operators satisfying
\[
a_\lambda + b_\lambda = \lambda I,\qquad
[a_\lambda, b_\lambda] = 0,\qquad
[a_\lambda, j_\lambda] = [b_\lambda, j_\lambda] = 0.
\]

\underline{\textbf{Stage IV:} Second-stage diagonalisation (fibrewise).}
\\
We now diagonalise $a_\lambda$ within each fibre. In the $\gamma_\lambda$-basis, $a_\lambda$ is a positive operator commuting with $J_{\fh_\lambda}$. Applying the real spectral theorem, since the spectral projections commute with $J_{\fh_\lambda}$, $J_{\fh_\lambda}$ decomposes fibrewise as the canonical symplectic operator. Thus, for each $\lambda>0$, there exist a measure $\nu_\lambda$ on $[0,\lambda]$ and an isomorphism
\[
V_\lambda : \widetilde{\fh}_\lambda \xrightarrow{\sim}
\int^\oplus_{[0,\lambda]} (\fh^{(\lambda)}_\alpha \oplus \fh^{(\lambda)}_\alpha) \, d\nu_\lambda(\alpha)
\]
such that
\[
a_\lambda \leftrightarrow \int^\oplus_{[0,\lambda]} \alpha\cdot I\,d\nu_\lambda(\alpha),\qquad
b_\lambda \leftrightarrow \int^\oplus_{[0,\lambda]} (\lambda-\alpha)\cdot I\,d\nu_\lambda(\alpha),\qquad
j_\lambda \leftrightarrow \int^\oplus_{[0,\lambda]} J_{\fh^{(\lambda)}_\alpha}\,d\nu_\lambda(\alpha).
\]

\underline{\textbf{Stage V:} Defining the transformation $\Phi$.}
\\
The joint index set and measure are
\[
\Lambda_+ := \bigsqcup_{\lambda>0} \{(\lambda,\alpha) : \alpha \in \spect(a_\lambda)\},\qquad
d\mu_+(\lambda,\alpha) := d\nu_\lambda(\alpha)\,d\mu_1(\lambda).
\]
The total direct integral space is
\[
\KKtot:=
(\KK_0\oplus\KK_0)\oplus
\int^\oplus_{\Lambda_+} (\fh_{(\lambda,\alpha)}\oplus\fh_{(\lambda,\alpha)}) \, d\mu_+(\lambda,\alpha),
\]
where $\KK_0 := \fh_*$ encodes the kernel.

For the kernel piece, choose $\fh_* \subset \HH_0$ maximal with $\fh_* \perp J_0\fh_*$ and define
\[
\Phi_0 := \gamma_*^{-1} : \HH_0 \xrightarrow{\sim} \fh_*\oplus\fh_* = \KK_0\oplus\KK_0,
\qquad \gamma_*(u,v) := u + J_0 v.
\]
Then $\Phi_0 J_0 \Phi_0^{-1} = J_{\fh_*}$ and $A|_{\HH_0}=B|_{\HH_0}=0$.

For the range piece, define bounded operators:
\begin{align}
  \Phi_R^{(0)} &:= \Gamma_0 \circ \iota : \HH_R \to \KK, \\
  \Phi_R^{(1)} &:= \left(\int^\oplus_{(0,\infty)} V_\lambda \, d\mu_1(\lambda)\right) \circ \Phi_R^{(0)} : \HH_R \to \KKtot\oplus\KKtot.
\end{align}
Then $\Phi_R^{(1)}$ satisfies
\[
(\Phi_R^{(1)})^t \Phi_R^{(1)} = C_R,\qquad
\Phi_R^{(1)} J_R (\Phi_R^{(1)})^t = M_C \Jbf_{\KKtot},
\]
\[
(\Phi_R^{(1)})^t M_A M_C^{-1} \Phi_R^{(1)} = A_R,\qquad
(\Phi_R^{(1)})^t M_B M_C^{-1} \Phi_R^{(1)} = B_R,
\]
where $M_A=\int_{\Lambda_+}^{\oplus}\alpha\,d\mu_+$, $M_B=\int_{\Lambda_+}^{\oplus}(\lambda-\alpha)\,d\mu_+$, $M_C=M_A+M_B$, and $\Jbf_{\mathcal{K}_{\mathrm{tot}}}=\int_{\Lambda_+}^{\oplus}\Jbf_{\fh_{\alpha}^{(\lambda)}}\,d\mu_+$. To verify the third identity, take $U = \left(\int^{\oplus}_{(0,\infty)}V_\lambda\,d\mu_1(\lambda)\right)\circ\Gamma_0$. Then $U\widehat{T}_A U^{-1}=\Jbf_{\KKtot} M_A$ and $U\widehat{T}_C U^{-1}=\Jbf_{\KKtot} M_C$, so for $x,y\in\HH_R$,
\begin{align*}
\langle (\Phi_R^{(1)})^t M_A M_C^{-1}\Phi_R^{(1)}x,y\rangle_{\HH_R}
&=\langle \iota x,\widehat{T}_{A}\widehat{T}_{C}^{-1}\iota y\rangle_{C}\\
&=\langle x,A_R y\rangle_{\HH_R}.
\end{align*}

Now define the (generally unbounded) transformation
\[
\Phi_R^\prime := M_C^{-1/2} \Phi_R^{(1)},\qquad
\dom(\Phi_R^\prime) := \{ x \in \HH_R : \Phi_R^{(1)} x \in \ran(M_C^{1/2}) \}.
\]
The domain is dense in $\HH_R$ by an argument analogous to Step 6 in the proof of Theorem~\ref{thm:di-wnf}. Finally, define
\[
\Phi := \Phi_0 \oplus \Phi_R^\prime,\qquad
\dom(\Phi) := \HH_0 \oplus \dom(\Phi_R^\prime).
\]

For $A$, on $\HH_0$, $\Phi_0^t \cdot 0 \cdot \Phi_0 = 0 = A|_{\HH_0}$; and on $\dom(\Phi_R^\prime)$:
\[
(\Phi_R^\prime)^t M_A \Phi_R^\prime = (\Phi_R^{(1)})^t M_A M_C^{-1} \Phi_R^{(1)} = A_R.
\]
Similarly, $(\Phi_R^\prime)^t M_B \Phi_R^\prime = B_R$.

For $J$, on $\HH_0$, $\Phi_0 J_0 \Phi_0^t = \Jbf_{\KK_0}$; and on $\ran(\Phi_R^\prime)$:
\[
\Phi_R^\prime J_R (\Phi_R^\prime)^t = M_C^{-1/2} (M_C \Jbf_{\KKtot}) M_C^{-1/2} = \Jbf_{\KKtot}.
\]
Since $\ran(\Phi_R^\prime)$ is dense, the identity extends to all of $\KKtot$.

The uniqueness of the joint measure class $[\mu_{\mathrm{tot}}]$ and the multiplicity function follows from standard spectral theory for commuting self-adjoint operators \cite[Section~VIII.2]{RS81}. This completes the proof.
\end{proof}

\section{Discussion on the main theorem}
\label{sec:discussion}
In this section, we make a discussion on the main result and see how it coincides with the finite-dimensional result in \cite{KAMAT2026298}.

\begin{enumerate}
    \item \textbf{Necessity of the commutativity hypothesis (H2)}: If a simultaneous symplectic diagonalisation exists, then on each fibre 
$(\lambda, t)$ we have $T_A \leftrightarrow t\lambda J$ and $T_B \leftrightarrow (1-t)\lambda J$, 
so $T_AT_B = T_BT_A = -t(1-t)\lambda^2 I$. Hence, the commutativity hypothesis (H2) is necessary.

\item Using $C_R = A_R + B_R$ as the primary operator is essential for symmetry. In the $C$-inner product, both $A_R$ and $B_R$ become bounded relative to $C_R$ 
($A_R \le C_R$, $B_R \le C_R$), so their $J$-rotations $T_A, T_B$ extend to $\HC$. Moreover, both commute with $T_C$, allowing simultaneous decomposition in Stage~II. Stage~IV then diagonalizes $A$ (and automatically $B$) within each $C$-fibre.

\item \textbf{The joint symplectic spectrum}: The parameters $(\lambda, t)$ live in $[0,\infty) \times [0,1]$, where:
\begin{itemize}
  \item $\lambda$ is a symplectic eigenvalue of $C = A+B$,
  \item $t = \alpha/\lambda$ is the fraction of that eigenvalue coming from $A$.
\end{itemize}
The constraint $\alpha + \beta = \lambda$ is automatic. The measure $\mu_+$ encodes both the distribution of $\lambda$ and the distribution 
of $t$ within each $\lambda$-eigenspace.

\item Setting $B=0$: $C_R = A_R$ and the $(A+B)$-inner product becomes the $A$-inner product.  
Then $b_\lambda = 0$ on every fibre, so $\beta = 0$ and $M_B = 0$. The construction reduces to the Williamson's normal form for a positive operator on a symplectic Hilbert space.

\item \textbf{Finite-dimensional result:} When $\mathcal{H}$ is finite-dimensional, $\mu = \sum_i
m_i\delta_{\lambda_i}$ and $\eta=\sum_j n_j\delta_{\nu_j}$ with $\lambda_i,\nu_j$ being the symplectic eigenvalues of $A$ and $B$ with multiplicities $m_i, n_j$, respectively. The operators decompose to $A\leftrightarrow \bigoplus_i(\lambda_i I_{2m_i})$ and \(B\leftrightarrow \bigoplus_j(\nu_j I_{2m_j})\) under a symplectic operator \(S\), recovering the finite-dimensional simultaneous Williamson's normal form \cite[Theorem~3.1]{KAMAT2026298}.
\end{enumerate}

\section{Applications}
In classical mechanics, $A$ and $B$ could represent two quadratic Hamiltonians (e.g., kinetic and potential energy). The simultaneous diagonalisation yields normal coordinates in which both Hamiltonians take the simple form $\sum \mu_i (p_i^2+q_i^2)$ and $\sum \nu_i (p_i^2+q_i^2)$. The commutativity condition (H2) means the two Hamiltonians generate flows that commute, which is a strong integrability condition.

\section{Concluding Remarks and Future Problems}

We have established a simultaneous Williamson's normal form for two bounded positive operators $A, B$ on a real symplectic Hilbert space, under the assumption that they commute in the sense that 
\[
T_A T_B = T_B T_A,
\]
where $T_A = JA$ and $T_B = JB$. The main result gives a common spectral representation in which both operators act as multiplication operators, and the symplectic structure reduces fiberwise to the standard rotation. This extends the finite-dimensional version from \cite{KAMAT2026298} to the infinite-dimensional setting.

That said, the unbounded case is a natural next step, and it comes with a number of genuine difficulties. First, there are immediate domain problems. If $A$ and $B$ are unbounded, the inner product 
\[
\langle x, y \rangle_C = \langle C x, y \rangle
\]
only makes sense for $x, y \in \operatorname{dom}(C)$, where $C = A+B$. The completion of this domain with respect to the associated norm need not coincide with the original Hilbert space $\mathcal H$. Moreover, the operators $T_A$ and $T_B$ become unbounded as well, so one has to be careful about where they are defined and how they interact.

On the spectral side, while the spectral theorem for unbounded self-adjoint operators still gives a direct integral decomposition, the spectral measures involved may be unbounded, and the commutativity condition has to be formulated in terms of spectral projections or resolvents rather than the operators themselves. This changes the nature of the problem considerably.

There is also the question of closability. The symplectic transformation $\Phi$ that appears in the construction may initially only be defined on a dense subspace, and it is not clear whether it extends to a closed operator without imposing extra hypotheses. Similarly, the joint kernel 
\[
\mathcal H_0 = \ker(A) \cap \ker(B)
\]
may fail to be closed when the operators are unbounded, so the usual orthogonal decomposition 
\[
\mathcal H = \mathcal H_0 \oplus \mathcal H_R
\]
may need to be rethought. And even on the kernel piece, the induced complex structure $J_0$ can be unbounded, which complicates the construction of the corresponding factor $\Phi_0$.

There are a few plausible routes around these obstacles. One could work with resolvents to bypass domain issues, or use quadratic form methods to handle the inner products indirectly. Another option is to define the transformation on a dense core first and then try to extend it by closure. If $T_A$ and $T_B$ generate strongly continuous groups, semigroup techniques might help. And of course, one could adapt the direct integral theory for unbounded operators, as discussed in \cite{Dixmier:1981:VNA,Hall:2013:QTM}.

Beyond the unbounded case, there are several other directions worth exploring. For instance, the results could be applied to quadratic Hamiltonians in quantum mechanics, or one could look for joint spectral invariants for non-commuting pairs. It would also be interesting to generalize the normal form to finite collections of pairwise commuting positive operators, or to develop numerical methods for approximating the joint spectral measure. There may even be applications in continuous-variable quantum information that have not yet been examined.

In any case, the simultaneous Williamson's normal form seems to offer a useful framework for infinite-dimensional symplectic diagonalization, and we see the extension to unbounded operators as one of the more challenging and worthwhile problems left open.

\section*{Acknowledgements}
Anmary Tonny thanks Dr. Sumesh K., Department of Mathematics, IIT Madras, for the financial support through Project No.   SB22231267MAETWO008573 and Project No. RF25260629MARFER008769 and Prof. B. V. Rajarama Bhat, ISI Bangalore for the financial support through the J.C. Bose Fellowship (Project No. JBR/2021/000024). This work was completed during her tenure at IIT Madras and ISI Bangalore..

\section{Appendix}
\subsection{Polarisation by a Complex Structure}
\begin{theorem}\label{appen:thm_polar}
    Let \((\HH,\langle\cdot,\cdot\rangle)\) be a real Hilbert space, and let \(J\in\mathcal{B}(\HH)\) be a bounded operator such that 
    \begin{enumerate}
        \item \(J^2=-I\) (complex structure), and
        \item \(J\) is orthogonal i.e. \(\langle Ju,Jv\rangle=\langle u,v\rangle\), for all \(u,v\in\HH\) (equivalently, \(J^t=-J\)). \end{enumerate}
        Define the associated symplectic form 
        \[\omega(u,v)\coloneqq\langle u,Jv\rangle, \text{ for all } u,v\in\HH.\]
        Then there exists a closed linear subspace \(V\subset\HH\) such that 
        \begin{enumerate}
            \item \(V\) is isotropic i.e. \(\omega(u,v)=0\), for all \(u,v\in V\), 
            \item \(V\) is maximal with respect to property (1) under inclusion, and
            \item the Hilbert space polarises into the following decomposition: 
            \[\HH=V\oplus JV.\]
        \end{enumerate}
\end{theorem}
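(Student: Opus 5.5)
The plan is a Zorn's lemma argument over closed isotropic subspaces, with the decomposition $\HH = V\oplus JV$ read off from maximality. The starting observation is that, since $J^t=-J$, the form satisfies $\omega(u,v)=\langle u,Jv\rangle = -\langle Ju,v\rangle$. Hence a subspace $V$ is isotropic exactly when $V\perp JV$, where orthogonality is in the ambient inner product. Also $\omega(u,u)=\langle u,Ju\rangle=0$ for every $u$ by skew-symmetry.

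\textbf{Step 1 (existence of a maximal isotropic subspace).} Consider the family $\mathcal{F}$ of closed isotropic subspaces, ordered by inclusion. It is nonempty, since it contains $\{0\}$. Given a chain $(V_i)$, take $W=\overline{\bigcup_i V_i}$. For $u,v$ in the union, both lie in some common $V_i$, so $\omega(u,v)=0$. By continuity of $\omega$ (as $J$ is bounded), $\omega$ also vanishes on $W\times W$. Thus $W\in\mathcal F$ is an upper bound, and Zorn's lemma gives a maximal element $V$. Maximality within closed isotropic subspaces agrees with maximality among all isotropic subspaces: if $V\subseteq U$ with $U$ isotropic, then $\overline U$ is isotropic by continuity, so $\overline U = V$ and hence $U=V$. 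This settles properties (1) and (2).

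\textbf{Step 2 ($V\oplus JV$ is closed and orthogonal).} Because $J$ is orthogonal and $V$ is closed, $JV$ is closed. Isotropy gives $V\perp JV$. An orthogonal sum of two closed subspaces is closed, so $V\oplus JV$ is a closed subspace of $\HH$.

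\textbf{Step 3 (exhaustion; the only substantive point).} Suppose $w\perp V\oplus JV$ with $w\neq 0$, and set $V'=V+\mathbb R w$. To see that $V'$ is isotropic, it suffices to check the following:
\begin{itemize}
\item $\omega(v,w)=\langle v,Jw\rangle = -\langle Jv,w\rangle=0$, because $w\perp JV$;
\item $\omega(w,v)=-\omega(v,w)=0$;
\item $\omega(w,w)=0$.
\end{itemize}
Then $V'$ is a closed isotropic subspace (a finite-dimensional extension of a closed subspace is closed). It is strictly larger than $V$, since $w\notin V$ because $w\perp V$ and $w\neq 0$. This contradicts maximality. Therefore $(V\oplus JV)^\perp=\{0\}$, and combined with Step 2 this gives $\HH=V\oplus JV$.

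I expect no real obstacle. The only care needed is that Zorn's lemma be applied to closed isotropic subspaces, with the closure-of-union step justified by the continuity of $\omega$. The hypothesis $J^2=-I$ is not actually used beyond ensuring that $J$ is an orthogonal isomorphism, so that $JV$ is closed and $J|_V:V\to JV$ is unitary. This is what makes the map $(u,v)\mapsto u+Jv$ an orthogonal isomorphism $V\oplus V\to\HH$, which is how the result is used in the proofs of Theorem~\ref{thm:di-wnf} and Theorem~\ref{thm:sim-wnf}.
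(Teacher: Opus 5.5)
Your proposal is correct and follows the same route as the paper: Zorn's lemma over closed isotropic subspaces with the closure of the union as upper bound, then a contradiction with maximality obtained by adjoining a nonzero $w\perp V\oplus JV$. You also supply details the paper leaves implicit. These are the continuity of $\omega$ for the chain bound, the equivalence of isotropy with $V\perp JV$, the closedness of $V\oplus JV$ (needed to pass from $(V\oplus JV)^\perp=\{0\}$ to $\HH=V\oplus JV$), and the closedness of $V+\mathbb{R}w$.
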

\begin{proof}
    Define the family \[\mathcal{F}=\{V\subset\HH\;\text{is a closed subspace}\;:\:\omega(u,v)=0, \text{ for all } u,v\in V\}.\]
    Take a chain \(\{V_{\alpha}\}\), under inclusion, of elements of \(\mathcal{F}\). Then \(\overline{\bigcup_{\alpha}V_\alpha}\) is the upper bound of this chain and by Zorn's lemma, we obtain a maximal isotropic subspace \(V\). 
    Now we prove the Hilbert space decomposition. Suppose, for the sake of contradiction, that there exists a non-zero \(x\in(V\oplus JV)^{\perp}\). Note that such an element is orthogonal to \(V\) with respect to \(\omega\). Define 
    \[W=V\oplus\text{span}\{x\}.\]
    Then for any \(u+sx,v+tx\in W\), 
    \begin{align*}
        \omega(u+sx,v+tx)&=\omega(u,v)+t\omega(u,x)+s\omega(x,v)+st\omega(x,x)\\&=0,
    \end{align*}
    where the first three terms vanish because \(V\) is isotropic and \(x\) is \(\omega\)-orthogonal to \(V\). But this contradicts the maximality of \(V\). 
    Therefore, \(V\oplus JV\) must be the whole space \(\HH\).
\end{proof}


\bibliographystyle{plain}  
\bibliography{references}

\end{document}